\documentclass[11pt]{article}
\usepackage[T1]{fontenc}
\usepackage{lmodern}
\usepackage{amsmath,amssymb,amsthm,mathtools}
\usepackage[margin=1in]{geometry}
\usepackage[hidelinks]{hyperref}
\usepackage{tikz}
\usetikzlibrary{arrows.meta}

\newtheorem{theorem}{Theorem}[section]
\newtheorem{question}[theorem]{Question}
\newtheorem{problem}[theorem]{Problem}
\newtheorem{lemma}[theorem]{Lemma}
\newtheorem{proposition}[theorem]{Proposition}
\newtheorem{corollary}[theorem]{Corollary}

\theoremstyle{definition}
\newtheorem{definition}[theorem]{Definition}

\numberwithin{equation}{section}
\usepackage{bm}
\newcommand{\one}{\mathbf 1}
\newcommand{\cF}{\mathcal F}

\title{Oriented Paths with Few Direction Flips Are Tournament Anti-Sidorenko}
\author{Hao Chen\thanks{School of Mathematical Sciences, Soochow University,
Suzhou 215006, China. E-mail:
\href{mailto:chenhao@suda.edu.cn}{\texttt{chenhao@suda.edu.cn}}.}
\and
Yupeng Lin\thanks{School of Mathematical Sciences,
University of Science and Technology of China, Hefei, Anhui 230026, China. Research supported by National Key Research and Development Program of China 2023YFA1010201 and National Natural Science Foundation of China (Grant No. 125B2019 and 12125106). E-mail:
\href{mailto:lyp_inustc@mail.ustc.edu.cn}{\texttt{lyp\_inustc@mail.ustc.edu.cn}}.}}
\date{}

\begin{document}

\maketitle

\begin{abstract}
An oriented graph \(H\) is said to be \emph{tournament anti-Sidorenko (TAS)} if a uniformly random tournament asymptotically maximizes the homomorphism density of \(H\) among all tournaments. For an oriented path \(P\), a \emph{direction flip} is a non-leaf source or sink. Sah, Sawhney and Zhao proved that consistently directed paths (paths with no direction flips) are TAS. He, Mani, Nie, Tung and Wei proved that for \(3\le k\le 7\), every oriented path of length \(k\) with exactly one direction flip is TAS, and Chen, Clemen and Noel recently extended this to every \(k\ge 3\). In this paper, we extend these results by proving that for every integer \(r\ge 0\), every oriented path of length at least \(1665r+1454\) with \(r\) direction flips is tournament anti-Sidorenko. 
\end{abstract}

\section{Introduction}

Let \(H\) and \(G\) be graphs. A \emph{graph homomorphism} from \(H\) to \(G\) is a map \(f:V(H)\to V(G)\) such that for every edge \(uv\in E(H)\), the pair \(f(u)f(v)\) is an edge of \(G\). The \emph{homomorphism density} of \(H\) in \(G\) is
\[
t(H,G):=\frac{|\operatorname{Hom}(H,G)|}{|V(G)|^{|V(H)|}},
\]
where \(\operatorname{Hom}(H,G)\) denotes the set of all homomorphisms. Sidorenko's conjecture~\cite{Sidorenko93} asserts that for every bipartite graph \(H\) and every graph \(G\),
\[
t(H,G)\ge t(K_2,G)^{|E(H)|}.
\]
Equivalently, among graphs \(G\) of fixed edge density \(p=\frac{2|E(G)|}{|V(G)|^2}\), the density of \(H\) is asymptotically minimized by the Erd{\"o}s-R{\'e}nyi random graph \(G(n,p)\). This conjecture has been intensively studied and verified for many families of graphs. We refer the reader to  \cite{ConlonKimLeeLee18,ConlonLee17,ConlonLee21,Coregliano24,ILL25} for some recent progress in Sidorenko's conjecture.

In this paper we study an analogous question in the setting of tournaments. An \emph{oriented graph} \(H=(V(H),E(H))\) consists of a vertex set $V(H)$ and \(E(H)\subseteq V(H)\times V(H)\) of directed edges with no loops and at most one directed edge between each pair of distinct vertices. A \emph{tournament} is an oriented complete graph. For an oriented graph \(H\) and a tournament \(T\), a homomorphism from $H$ to $T$ is a map \(f:V(H)\to V(T)\) such that every edge \((u,v)\in E(H)\) maps to an edge \((f(u),f(v))\in E(T)\). The homomorphism density is then defined naturally as
\[
t(H,T):=\frac{|\operatorname{Hom}(H,T)|}{|V(T)|^{|V(H)|}}.
\]
A natural question is whether certain oriented graphs satisfy a Sidorenko‐type inequality in tournaments. An interesting result proved by Sah, Sawhney and Zhao in~\cite{SahSawhneyZhao23} implies that for every tournament \(T\), $t(P_k,T)\le (1/2)^{k},$
where $P_k$ is the consistently directed $k$-edge path. This motivates the following two definitions (see~\cite{FoxHimwichManiZhou24+,FoxHimwichManiZhou25}).

\begin{definition}
An oriented graph \(H\) is \emph{tournament Sidorenko (TS)} if for every tournament \(T\),
\[
t(H,T)\ge (1-o(1))(1/2)^{|E(H)|},
\]
where the \(o(1)\) term tends to zero as \(|V(T)|\to\infty\).
\end{definition}
 
\begin{definition}
An oriented graph \(H\) is \emph{tournament anti-Sidorenko (TAS)} if for every tournament \(T\),
\[
t(H,T)\le (1/2)^{|E(H)|}.\]
\end{definition}

Thus the directed path \(P_k\) is TAS. However, for general oriented paths the situation is more subtle. Zhao and Zhou~\cite{ZhaoZhou20} characterized all \emph{impartial} digraphs and hence, in particular, all \emph{impartial} paths, which are both TS and TAS. He, Mani, Nie, Tung and Wei~\cite{HeManiNieTungWei25+} classified the TS/TAS property for some short oriented paths. A natural question is whether some structural parameter determines the TS/TAS property of a given oriented path.

We encode an oriented path with vertex set $\{v_0,v_1,\ldots,v_k\}$ and \(k\) edges by a vector \(\vec{x}=(x_1,\dots,x_k)\in\{-1,1\}^k\), where \(x_i=1\) means the edge is oriented from \(v_{i-1}\) to \(v_i\), and \(x_i=-1\) means from \(v_i\) to \(v_{i-1}\). We denote this path by \(P_k(\vec{x})\). A \emph{direction flip} is an internal vertex \(v_i\) with \(x_i x_{i+1}=-1\). Equivalently, it is an internal source or sink. The flips and the two
endpoints partition the edge positions into maximal constant-sign blocks. For example, \(P_3(1,1,-1)\) has one flip at $v_2$, while \(P_6(1,1,-1,-1,1,1)\) has two flips at \(v_2\) and \(v_4\), splitting the path into three blocks of lengths \(2,2,2\).
\begin{figure}[htbp]
\centering
\hspace*{-0.065\textwidth}% 整体左移量，可根据需要调整
\begin{minipage}[c]{0.30\textwidth}
\centering
\scalebox{0.72}{%
\begin{tikzpicture}[
  vertex/.style={circle, draw=black, fill=white, inner sep=2pt,
                 minimum size=5mm, font=\small},
  flip/.style={circle, draw=red, thick, fill=red!10, inner sep=2pt,
               minimum size=6mm, font=\small},
  edge/.style={-{Stealth[length=2.5mm]}, thick}
]
% Left: P_3(1,1,-1)
\node[vertex] (v0) at (0,0) {$v_0$};
\node[vertex] (v1) at (1.8,0) {$v_1$};
\node[flip]  (v2) at (3.6,0) {$v_2$};
\node[vertex] (v3) at (5.4,0) {$v_3$};
\draw[edge] (v0) -- (v1);
\draw[edge] (v1) -- (v2);
\draw[edge] (v3) -- (v2);
\node[below=2mm] at (v2.south) {sink};
\end{tikzpicture}}%
\end{minipage}%
\quad
\begin{minipage}[c]{0.30\textwidth}
\centering
\scalebox{0.72}{%
\begin{tikzpicture}[
  vertex/.style={circle, draw=black, fill=white, inner sep=2pt,
                 minimum size=5mm, font=\small},
  flip/.style={circle, draw=red, thick, fill=red!10, inner sep=2pt,
               minimum size=6mm, font=\small},
  edge/.style={-{Stealth[length=2.5mm]}, thick}
]
% Right: P_6(1,1,-1,-1,1,1)
\node[vertex] (v0) at (0,0) {$v_0$};
\node[vertex] (v1) at (1.5,0) {$v_1$};
\node[flip]   (v2) at (3.0,0) {$v_2$};
\node[vertex] (v3) at (4.5,0) {$v_3$};
\node[flip]   (v4) at (6.0,0) {$v_4$};
\node[vertex] (v5) at (7.5,0) {$v_5$};
\node[vertex] (v6) at (9.0,0) {$v_6$};
\draw[edge] (v0) -- (v1);
\draw[edge] (v1) -- (v2);
\draw[edge] (v3) -- (v2);
\draw[edge] (v4) -- (v3);
\draw[edge] (v4) -- (v5);
\draw[edge] (v5) -- (v6);
\node[below=2mm] at (v2.south) {sink};
\node[below=2mm] at (v4.south) {source};
\end{tikzpicture}}%
\end{minipage}

\caption{Direction flips in $P_3(1,1,-1)$ and $P_6(1,1,-1,-1,1,1)$.
The red vertices are direction flips: $v_2$ is a sink in the left path,
and $v_2,v_4$ are a sink and a source respectively in the right path.}
\label{fig:direction-flip-example}
\end{figure}

He, Mani, Nie, Tung and Wei~\cite{HeManiNieTungWei25+} proved that for \(3\le k\le 7\), every oriented path of length $k$ and exactly one direction flip is TAS, and they conjectured that for \(k\ge 3\), every oriented path of length $k$ and exactly one direction flip is TAS. Recently, Chen, Clemen and Noel~\cite{ChenFelixNoel26+} confirmed this conjecture and further showed that if every block of an oriented path has length at least two and every internal block has length divisible by four, then the oriented path is TAS.

It is natural to ask whether, when the number of direction flips is ``small'' compared with the length of the oriented path, the corresponding orientation must be anti-Sidorenko. 
In this paper, we confirm this phenomenon by proving the following theorem.

\begin{theorem}\label{thm:main}
There exist constants \(\alpha,\beta>0\) such that for all integers $k>r\geq0$ with \(k\ge \alpha r+\beta\), every oriented path of length $k$ containing $r$ direction flips is tournament anti-Sidorenko. 
In particular, one can take $\alpha=1665$ and $\beta=1454$.
\end{theorem}

For an integer \(k\ge1\), let \(f(k)\) denote the maximal \(m \in \{0,1,\ldots,k\}\)
such that every orientation of the \(k\)-edge path with fewer than \(m\)
direction flips is tournament anti-Sidorenko. The following corollary is immediate from Theorem~\ref{thm:main}.
\begin{corollary}\label{coro:main}
For every integer $k\ge 1454$,
\[
\displaystyle
 f(k)\ge
 1+\left\lfloor\frac{k-1454}{1665}\right\rfloor .
 \label{eq:f-bound}
\]
Moreover, \[\liminf_{k \to +\infty}\frac{f(k)}{k}\ge \frac{1}{1665}.\]
\end{corollary}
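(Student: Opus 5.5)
The plan is to derive both parts of the corollary directly from Theorem~\ref{thm:main} with \(\alpha=1665\) and \(\beta=1454\). No new tournament argument is needed; the work is checking that the integer \(m:=1+\lfloor (k-1454)/1665\rfloor\) satisfies the conditions in the definition of \(f(k)\).

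First, I check that \(m\) lies in the allowed range \(\{0,1,\ldots,k\}\) for every \(k\ge 1454\).
\begin{itemize}
\item Since \(k-1454\ge 0\), the floor is nonnegative, so \(m\ge 1\).
\item We have \(m\le 1+(k-1454)/1665\).
\item This is at most \(k\) exactly when \(1665+k-1454\le 1665k\), that is, when \(211\le 1664k\), which certainly holds.
\end{itemize}

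Next, I verify the defining property of \(f\) for \(m\). Let \(P\) be any orientation of the \(k\)-edge path with \(r<m\) direction flips. Then \(r\) is an integer with \(0\le r\le m-1=\lfloor (k-1454)/1665\rfloor\). This gives \(1665r\le k-1454\), so \(k\ge 1665r+1454\). The theorem also requires \(k>r\). This follows because \(r\le (k-1454)/1665<k\) when \(k\ge 1454\). Theorem~\ref{thm:main} therefore applies and \(P\) is tournament anti-Sidorenko. So \(m\) belongs to the set over which \(f(k)\) is the maximum, and hence \(f(k)\ge m\), which is the displayed bound.

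For the asymptotic statement, I use \(\lfloor x\rfloor> x-1\) to get \(f(k)> (k-1454)/1665\), and hence
\[
\frac{f(k)}{k}\ \ge\ \frac{1}{1665}-\frac{1454}{1665\,k}
\]
for all \(k\ge 1454\). Taking \(\liminf_{k\to\infty}\) gives \(\liminf f(k)/k\ge 1/1665\).

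There is no substantive obstacle. The only points needing care are confirming that \(m\le k\) and that the side condition \(k>r\) of Theorem~\ref{thm:main} holds, and both are elementary inequalities for \(k\ge 1454\).
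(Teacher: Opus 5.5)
Your proposal is correct and follows the paper's own (immediate) derivation: set $r_k=\lfloor (k-1454)/1665\rfloor$, observe that every orientation with at most $r_k$ flips satisfies $k\ge 1665r+1454$ and is therefore TAS by Theorem~\ref{thm:main}, and conclude $f(k)\ge r_k+1$. Your checks that $m\le k$ and $k>r$, and your derivation of the $\liminf$ bound from $\lfloor x\rfloor>x-1$, are the routine details the paper leaves implicit.
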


One might ask whether, dually, there exists some absolute constant $c$ such that for every sufficiently large $k$ and vector $\vec{x}\in \{-1,1\}^k$, oriented paths $P_k(\vec{x})$ with at least $ck$ direction flips are TS. In fact, no such constant $c$ exists. We will give a brief discussion of this in Concluding Remarks.

\section{Preliminaries}
\label{sec:preliminaries}

We recall some basic notations. 
Let $[k]:=\{1,\ldots,k\}$.
If $H$ is an
oriented graph, then $V(H)$ and $E(H)$ denote its vertex and edge
sets. When we say edges in $H$, we always mean directed edges. Throughout the paper we identify an oriented path of length \(k\) with its sign vector \(\vec{x}=(x_1,\dots,x_k)\in\{-1,1\}^k\); the \(i\)-th entry records the direction of the \(i\)-th edge relative to the natural order of the vertices $\{v_0,v_1,...,v_k\}$. We denote the corresponding oriented path by \(P_k(\vec{x})\). For integer $j>0$, the notation $\vec{\bm1_j}$ denotes the length-$j$ vector $(1,1,...,1)$. 

A direction flip in $P_k(\vec{x})$ is an internal vertex $v_i$ with $x_{i}x_{i+1}=-1.$ Note that the number of direction flips is determined by the vector $\vec{x}$. Therefore if there are $r$ direction flips in $P_k(\vec{x})$, we also say $\vec{x}$ has $r$ direction flips.

For a subset $F\subseteq E(H)$, let $H\langle F\rangle$ be the
spanning subgraph of $H$ with edge set $F$, that is, the vertex set
is $V(H)$ and the edge set is $F$. 

\subsection{Graph limits and tournamentons}\label{subsec:graphlimits}

In this paper, we work within the framework of graph limits. 
For a comprehensive introduction to graph limits, we refer
readers to~\cite{Lovasz12}. 
A \emph{kernel} is a bounded measurable function
$U\colon[0,1]^2\to\mathbb R$. A \emph{tournamenton} is a kernel
$W\colon[0,1]^2\to[0,1]$ such that $W(x,y)+W(y,x)=1$
for every $(x,y)\in[0,1]^2$. A kernel $U$ is
\emph{antisymmetric} if $U(x,y)=-U(y,x)$ for every $(x,y)\in[0,1]^2$. In particular, if $W$ is a
tournamenton, then $U=W-1/2$ is an antisymmetric kernel. We refer readers to~\cite{Grzesik+23,GrzesikKralLovaszVolec23,Kral+26+,NoelRanganathanSimbaqueba26} for a comprehensive treatment of the theory of tournamentons.

For an oriented graph $H$ and a kernel $U$, the \emph{homomorphism
density} of $H$ in $U$ is defined by
\begin{equation}
 t(H,U)=
 \int_{[0,1]^{|V(H)|}}
 \prod_{(v,w)\in E(H)} U(x_v,x_w)\,\prod_{u \in V(H)}dx_u.
 \label{eq:tournamenton-density}
\end{equation}

A sequence $(G_n)_{n\in\mathbb N}$ of tournaments is
\emph{convergent} if the sequence $t(H,G_n)$ converges for every
oriented graph $H$. By the standard theory of tournament limits, for
every convergent sequence of tournaments there exists a tournamenton
$W$ such that $t(H,G_n)\to t(H,W)$ for every oriented graph $H$; conversely, every tournamenton $W$
arises as the limit of some convergent sequence of tournaments. 

We now state and prove the equivalence between universal inequalities
over finite tournaments and over tournamentons. This allows us to
work exclusively with tournamentons in the analytic argument. 

\begin{proposition}
\label{prop:equivalence-prelim}
Let $H$ be an oriented graph and let $\xi\ge0$. The following two
statements are equivalent:
\begin{enumerate}
\item[(i)] $t(H,T)\le\xi$ for every finite tournament $T$.
\item[(ii)] $t(H,W)\le\xi$ for every tournamenton $W$.
\end{enumerate}
\end{proposition}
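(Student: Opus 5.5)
We prove the two implications separately, using only the standard correspondence between convergent tournament sequences and tournamentons recalled above.

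\emph{(ii)$\Rightarrow$(i).} Given a finite tournament $T$ on vertex set $[n]$, form its step tournamenton $W_T$. Partition $[0,1]$ into $n$ intervals $I_1,\dots,I_n$ of length $1/n$. For $i\ne j$, set $W_T=1$ on $I_i\times I_j$ when $(i,j)\in E(T)$ and $W_T=0$ otherwise. On the diagonal blocks $I_i\times I_i$ set $W_T=1/2$; this keeps $W_T(x,y)+W_T(y,x)=1$. Expanding $t(H,W_T)$ over the block assignments $\phi\colon V(H)\to[n]$ gives two kinds of terms. An injective, or more generally loop-free, assignment that is a homomorphism contributes $n^{-|V(H)|}$. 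An assignment sending some edge into a diagonal block contributes a nonnegative amount, because every factor is in $[0,1]$. Since $T$ has no loops, every homomorphism $H\to T$ sends each edge to an off-diagonal pair. Hence $t(H,T)\le t(H,W_T)\le \xi$.

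\emph{(i)$\Rightarrow$(ii).} Fix a tournamenton $W$. By the converse part of the limit theory there is a convergent sequence of tournaments $G_n$ with $t(H,G_n)\to t(H,W)$. Each $t(H,G_n)\le\xi$ by (i), so the limit satisfies $t(H,W)\le\xi$. If one prefers not to invoke the converse statement as a black box, the same conclusion follows from the random $W$-tournament. Sample $x_1,\dots,x_n$ uniformly from $[0,1]$ and orient $i\to j$ with probability $W(x_i,x_j)$, independently over pairs $i<j$. Homomorphism densities of the resulting $G_n$ concentrate around $t(H,W)$ up to an $O(1/n)$ error coming from non-injective maps, and this again gives $t(H,W)\le \xi$.

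The only delicate point is the diagonal of $W_T$ in the first direction. A tournamenton is required to satisfy $W(x,x)=1/2$ on the diagonal, so $W_T$ cannot simply be zero there, and $t(H,W_T)$ is not exactly equal to $t(H,T)$. The argument handles this by using only the inequality $t(H,T)\le t(H,W_T)$, which is all that is needed.
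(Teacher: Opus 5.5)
Your proof is correct and follows the paper's. In (ii)$\Rightarrow$(i) you use the same step tournamenton $W_T$, with value $1/2$ on the diagonal blocks, to get $t(H,T)\le t(H,W_T)$; for (i)$\Rightarrow$(ii) you cite the recalled limit-theory fact and back it up with the same $W$-random tournament the paper uses. One simplification: concentration is unnecessary, since (i) holds for every realization you can take expectations to get $\frac{(n)_{|V(H)|}}{n^{|V(H)|}}\,t(H,W)\le\mathbb{E}\,t(H,G_n)\le\xi$, and then let $n\to\infty$, exactly as the paper does.
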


\begin{proof}
First assume that \emph{(ii)} holds. Let $T$ be a finite tournament
of order $n$. Partition $[0,1]$ into $n$ measurable sets
$A_1,\dots,A_n$ of equal measure. Define a tournamenton $W_T$ by
setting $W_T(x,y)=1$ and $W_T(y,x)=0$ whenever $x\in A_i$,
$y\in A_j$, and $i\to j$ is an edge of $T$; inside each diagonal
part $A_i\times A_i$, set $W_T=1/2$. For any map
$\phi\colon V(H)\to[n]$, the contribution to $t(H,W_T)$ coming from
the product region $\prod_{v\in V(H)}A_{\phi(v)}$ is
$n^{-|V(H)|}$ if $\phi$ is a homomorphism of $H$ into $T$, and is $0$
if $\phi$ is injective but not a homomorphism. Non-injective maps
contribute a nonnegative error bounded by
$1-\frac{(n)_{|V(H)|}}{n^{|V(H)|}}$, where $(n)_i:=\{n(n-1)\cdots(n-i+1)\}$. 
Hence
\begin{equation}
 t(H,T)\le t(H,W_T)
 \le t(H,T)+1-\frac{(n)_{|V(H)|}}{n^{|V(H)|}}.
 \label{eq:step-ineq}
\end{equation}
Since $t(H,W_T)\le\xi$ by assumption, we obtain $t(H,T)\le\xi$.
Thus \emph{(ii)} implies \emph{(i)}.

Conversely, assume that \emph{(i)} holds. Let $W$ be a
tournamenton. Sample independent uniform points
$X_1,\dots,X_n\in[0,1]$. Conditional on these points, orient each
unordered pair $\{i,j\}$ independently according to
$\mathbb P(i\to j\mid X_1,\dots,X_n)=W(X_i,X_j)$. This yields a
random finite tournament $T_n$ on vertex set $[n]$. For an injective $\phi\colon V(H)\to[n]$, the variables
$X_{\phi(v)}$ ($v\in V(H)$) are independent uniform on $[0,1]$ and the
edges are oriented independently, so
\[
  \mathbb P\bigl(\phi\text{ is a homomorphism }H\to T_n\bigr)
  =\mathbb E\Bigl[\prod_{(v,w)\in E(H)}
    W(X_{\phi(v)},X_{\phi(w)})\Bigr]
  =t(H,W).
\]
Hence the expected contribution of $\phi$ to
$t(H,T_n)=|\mathrm{Hom}(H,T_n)|/n^{|V(H)|}$ is $t(H,W)/n^{|V(H)|}$.  Summing over the
$(n)_{|V(H)|}$ injective maps and writing $C_n$ for the (nonnegative)
contribution of the non-injective maps, which is at most
$(n^{|V(H)|}-(n)_{|V(H)|})/n^{|V(H)|}$, gives
\[
  \mathbb E\,t(H,T_n)=\frac{(n)_{|V(H)|}}{n^{|V(H)|}}t(H,W)+C_n,
\]
where $0\le C_n\le 1-\frac{(n)_{|V(H)|}}{n^{|V(H)|}}.$
Since every
realization satisfies $t(H,T_n)\le\xi$, taking expectations gives
\[
 \frac{(n)_{|V(H)|}}{n^{|V(H)|}}t(H,W)\le\xi.
\]
Letting $n\to\infty$ proves $t(H,W)\le\xi$. Thus \emph{(i)} implies
\emph{(ii)}.
\end{proof}

\subsection{Spectral and Functional Analytic Tools}\label{subsec:spectral}

We collect here the standard operator-theoretic facts used in the
analytic part of the proof. We refer readers to~\cite{Conway} for more details. Throughout, \(L^2=L^2([0,1];\mathbb R)\)
with inner product
\[
 \langle f,g\rangle=\int_0^1 f(x)g(x)\,dx.
\]

A square-integrable kernel \(L\in L^2([0,1]^2)\) defines a bounded
linear operator on \(L^2\) by
\[
 (Lf)(x)=\int_0^1 L(x,y)f(y)\,dy.
\]
Such operators are Hilbert--Schmidt and hence compact; their adjoint
has kernel \(L^*(x,y)=L(y,x)\). An operator is \emph{positive semidefinite} if it is self-adjoint (\(T^*=T\)) and satisfies \(\langle Tg,g\rangle\ge0\) for every \(g\in L^2\).

We next recall some spectral properties for compact self-adjoint positive semidefinite operator. Let $A$ be a compact self-adjoint positive semidefinite operator. By the spectral theorem for compact self-adjoint positive semidefinite operators
\cite[Chapter II, \S5, Theorem 5.1 and Corollary 5.3]{Conway},
the nonzero spectrum of \(A\) consists of positive eigenvalues of
finite multiplicity, with zero as the only possible accumulation
point. Moreover, the eigenspaces corresponding to distinct nonzero
eigenvalues are orthogonal.

If $P_\lambda$ is the orthogonal
projection onto $\ker(A-\lambda I)$, then $
A=\sum_{\lambda>0}\lambda P_\lambda
$
with convergence in operator norm, whereas
$
I=P_0+\sum_{\lambda>0}P_\lambda
$
holds in the strong operator topology. Hence $\mathbf1=P_0\mathbf1+\sum_{\lambda>0}P_\lambda\mathbf1
$ with convergence in $L^2$. Define
$$
\mu_A=
\|P_0\mathbf1\|_2^2\delta_0+
\sum_{\lambda>0}\|P_\lambda\mathbf1\|_2^2\delta_\lambda,
$$
where $\delta_\lambda$ denote the Dirac
measure at $\lambda$, i.e. the probability measure supported on the single point $\lambda$.
The orthogonality of $P_\lambda$ and $\|\mathbf1\|_2=1$ show that $\mu_A$ is a
probability measure.

The following lemma is a specialized version of the spectral theorem. We refer readers to~\cite[Chapter IX]{Conway} for a complete treatment of this theorem.
\begin{lemma}
\label{lem:spectral-integration}
Let \(A\) be a compact self-adjoint positive semidefinite operator on
\(L^2[0,1]\), and let \(\mu_A\) be the probability measure on
\(\sigma(A)\) defined by $
\mu_A
=
\|P_0\one\|_2^2\,\delta_0
+
\sum_{\lambda>0}\|P_\lambda\one\|_2^2\,\delta_\lambda,
$
where \(P_0\) is the projection onto \(\ker A\), and \(P_\lambda\) is
the projection onto \(\ker(A-\lambda I)\). Then for every continuous
function \(f:\sigma(A)\to\mathbb R\),
\[
\langle \one,f(A)\one\rangle
=
\int_{\sigma(A)} f(\lambda)\,d\mu_A(\lambda).
\]
\end{lemma}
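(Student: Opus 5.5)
The identity is linear in \(f\), so I would prove it for monomials first and then pass to continuous functions by approximation.

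\emph{Monomials.} Since \(A\) is compact, self-adjoint and positive semidefinite, the spectral decomposition recalled above gives \(A=\sum_{\lambda>0}\lambda P_\lambda\) in operator norm, with the \(P_\lambda\) mutually orthogonal projections that are also orthogonal to \(P_0\). Hence for every integer \(n\ge1\) we have \(A^n=\sum_{\lambda>0}\lambda^nP_\lambda\). For \(n=0\) we use \(I=P_0+\sum_{\lambda>0}P_\lambda\) strongly. Applying these to \(\one\) and using orthogonality of the ranges gives
\[
\langle \one,A^n\one\rangle=\sum_{\lambda>0}\lambda^n\|P_\lambda\one\|_2^2+0^n\|P_0\one\|_2^2=\int_{\sigma(A)}\lambda^n\,d\mu_A(\lambda),
\]
with the convention \(0^0=1\). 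The series converges absolutely because \(\sum_\lambda\|P_\lambda\one\|_2^2\le1\) and \(\lambda\le\|A\|\). By linearity the identity then holds for every polynomial \(p\), where \(p(A)\) is the usual polynomial in \(A\). This agrees with the continuous functional calculus on polynomials.

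\emph{Extension to continuous \(f\).} The spectrum \(\sigma(A)\subseteq[0,\|A\|]\) is compact. By Weierstrass, choose polynomials \(p_m\to f\) uniformly on \(\sigma(A)\). The continuous functional calculus is an isometric \(*\)-homomorphism, so \(\|f(A)-p_m(A)\|=\sup_{\sigma(A)}|f-p_m|\to0\), and therefore \(\langle\one,p_m(A)\one\rangle\to\langle\one,f(A)\one\rangle\). On the other side, \(\mu_A\) is a probability measure supported on \(\sigma(A)\): every positive eigenvalue lies in the spectrum, and \(0\in\sigma(A)\) whenever \(P_0\one\neq0\), since then \(\ker A\neq0\). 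So \(\int p_m\,d\mu_A\to\int f\,d\mu_A\) by uniform convergence.

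A more direct route is also available. Since \(f(A)\) acts as \(f(\lambda)\) on each eigenspace \(\ker(A-\lambda I)\), including \(\lambda=0\) on \(\ker A\), one can expand \(\one=P_0\one+\sum_{\lambda>0}P_\lambda\one\) and compute termwise. The approximation argument avoids having to justify this eigenspace description separately.

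There is no real obstacle. The only point needing care is the bookkeeping at \(\lambda=0\): if \(0\) is an accumulation point of \(\sigma(A)\) but not an eigenvalue, then \(P_0=0\) and \(\mu_A\) puts no mass there, which is consistent with the argument above.
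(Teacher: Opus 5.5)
Your proof is correct, but your main argument takes a different route from the paper's. The paper goes straight to what you call the ``more direct route.'' It writes $f(A)=f(0)I+\sum_{\lambda>0}(f(\lambda)-f(0))P_\lambda$, observes that this converges in operator norm because $f(\lambda)-f(0)\to 0$ as $\lambda\to 0$, and applies it to $\one=P_0\one+\sum_{\lambda>0}P_\lambda\one$. It then reads off $\langle\one,f(A)\one\rangle$ from the orthogonality of the projections. You instead verify the identity only for monomials, where it follows from $A=\sum_{\lambda>0}\lambda P_\lambda$ and $P_\lambda P_\mu=0$ for $\lambda\neq\mu$. You then reach general continuous $f$ through Weierstrass approximation and the isometry $\|g(A)\|=\sup_{\sigma(A)}|g|$ of the continuous functional calculus. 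The paper's route is shorter and gives the stronger vector identity $f(A)\one=f(0)P_0\one+\sum_{\lambda>0}f(\lambda)P_\lambda\one$. However, it takes the eigenspace formula for $f(A)$ as a given consequence of the spectral theorem. Your route needs only the norm expansion of $A$ itself, and it ties $f(A)$ explicitly to the continuous functional calculus. That link is what guarantees that a choice such as $f(\lambda)=(\rho^2+\lambda)^{-1}$ in Lemma~\ref{lem:spectral-gap} produces the genuine inverse $(\rho^2I+A)^{-1}$. In effect, your approximation argument is exactly the justification of the paper's eigenspace formula. One small simplification: $A$ is compact on the infinite-dimensional space $L^2[0,1]$, so $0\in\sigma(A)$ always, and no case distinction is needed to see that $\mu_A$ is supported on $\sigma(A)$.
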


\begin{proof}
Since \(A\) is compact and positive semidefinite, its nonzero spectrum consists
of positive eigenvalues \(\lambda>0\) accumulating only at \(0\).
By the spectral theorem for compact self-adjoint operators, for
every continuous function \(f:\sigma(A)\to\mathbb R\),
\[
f(A)
=
f(0)I
+
\sum_{\lambda>0}
\bigl(f(\lambda)-f(0)\bigr)P_\lambda.
\]
The continuity of \(f\) at \(0\) gives
\(f(\lambda)-f(0)\to0\), so the series converges in operator norm.
Thus \(f(A)\) is bounded. Applying \(f(A)\) to the \(L^2\)-convergent
expansion
$
\one
=
P_0\one
+
\sum_{\lambda>0}P_\lambda\one$
and using the orthogonality of the spectral projections, we get $
f(A)\one
=
f(0)P_0\one
+
\sum_{\lambda>0}f(\lambda)P_\lambda\one.$
Taking the inner product with \(\one\) gives
\[
\langle \one,f(A)\one\rangle
=
f(0)\|P_0\one\|_2^2
+
\sum_{\lambda>0}f(\lambda)\|P_\lambda\one\|_2^2
=
\int_{\sigma(A)} f(\lambda)\,d\mu_A(\lambda).
\]
\end{proof}

We then give several basic lemmas from functional analysis that will be applied in Section~\ref{sec:large}. Readers may skip these lemmas for now and return to them when they are needed in Section~\ref{sec:large}. The proofs of these lemmas are standard in functional analysis, so we place the proofs in Appendix~\ref{app:A} so as not to interrupt the
main argument. 

A bounded linear operator \(T\) on \(L^2\) is
\emph{positive} if \(Tf\ge0\) almost everywhere whenever \(f\ge0\)
almost everywhere. It is important to highlight that here the notion ``positive” should not be confused with ``positive semidefinite”. 

For positive compact operators, we use the total-cone version of the classical Krein--Rutman theorem~\cite{KreinRutman1948} (see~\cite[Theorem 5.2]{LuLu2022}) to obtain the following lemma. A brief introduction of the Krein--Rutman theorem and why it implies Lemma~\ref{thm:positive-compact} is given in Appendix~\ref{app:krein-rutman}. 

\begin{lemma}[Krein--Rutman Theorem for $L^2$] 
\label{thm:positive-compact}
Let \(T\) be a positive compact operator on 
\(L^2[0,1]\). If its spectral radius \(\rho(T)\) is positive, then
there is a nonzero function \(f\ge0\) such that $Tf=\rho(T)f.$
\end{lemma}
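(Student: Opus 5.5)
The plan is to reduce the statement to the total-cone Krein--Rutman theorem \cite[Theorem 5.2]{LuLu2022}. That theorem says: if \(X\) is a real Banach space, \(K\subseteq X\) is a closed convex cone that is \emph{total} (that is, \(\overline{K-K}=X\)), and \(T\) is a compact linear operator with \(T(K)\subseteq K\) and \(\rho(T)>0\), then there is \(f\in K\setminus\{0\}\) with \(Tf=\rho(T)f\). So I will take \(X=L^2[0,1]\) and \(K=\{f\in L^2: f\ge 0 \text{ a.e.}\}\), and check the hypotheses one at a time.

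\emph{\(K\) is a cone.} \(K\) is convex and closed under multiplication by nonnegative scalars. Also \(K\cap(-K)=\{0\}\) in \(L^2\), since a function that is both \(\ge0\) and \(\le0\) almost everywhere is zero almost everywhere. So \(K\) is a proper cone.

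\emph{\(K\) is closed.} Suppose \(f_n\in K\) and \(f_n\to f\) in \(L^2\). Pass to a subsequence converging almost everywhere. Then \(f\ge0\) almost everywhere, so \(f\in K\).

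\emph{\(K\) is total.} Every \(f\in L^2\) splits as \(f=f^+-f^-\) with \(f^\pm=\max(\pm f,0)\). Both parts satisfy \(|f^\pm|\le|f|\), so \(f^\pm\in K\). Hence \(K-K=L^2\), which is stronger than totality.

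\emph{The remaining hypotheses.} Positivity of \(T\), in the sense defined above, says exactly that \(T(K)\subseteq K\). Compactness and \(\rho(T)>0\) are assumed in the statement.

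The abstract theorem then produces \(f\in K\setminus\{0\}\) with \(Tf=\rho(T)f\). This \(f\) is a nonzero function with \(f\ge0\) almost everywhere, which is the conclusion of Lemma~\ref{thm:positive-compact}.

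There is one point of care. The cited theorem is stated for real Banach spaces, whereas the spectral radius is usually defined through a complexification. Here \(\rho(T)=\lim\|T^n\|^{1/n}\) by Gelfand's formula, and this quantity is the same for \(T\) and its complexification. So the two definitions agree and no real difficulty arises. The one step needing genuine justification is closedness of the cone, which the subsequence argument above handles. The proof is therefore a direct verification rather than anything substantive.
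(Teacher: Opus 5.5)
Your proposal is correct and follows the paper's own argument: the paper likewise specializes the total-cone Krein--Rutman theorem to the cone of almost-everywhere nonnegative functions in $L^2[0,1]$, checks totality via $f=f_+-f_-$, and identifies the spectral radius with that of the complexification through Gelfand's formula. Your subsequence argument for closedness of the cone fills in a detail that the paper only asserts.
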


The next lemma is the invertibility for a skew-adjoint perturbation. The proof of this lemma is in Appendix~\ref{app:skew}.
\begin{lemma}
\label{lem:skew-invertibility}
Let \(L^2=L^2([0,1];\mathbb R)\). If $C$ is a bounded operator
satisfying \(C^*=-C\) and \(\eta>0\), then \(\eta I-C\) is invertible, and $\|(\eta I-C)g\|_2^2
=\eta^2\|g\|_2^2+\|Cg\|_2^2$
for $g\in L^2.$
Furthermore,
\[
(\eta I-C)^{-1}
=(\eta I+C)(\eta^2I-C^2)^{-1}.
\]
\end{lemma}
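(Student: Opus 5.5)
The proof of Lemma~\ref{lem:skew-invertibility} rests on one norm identity, from which the rest follows by standard functional analysis.

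\emph{Step 1: the norm identity.} Since \(C^*=-C\), for every real \(g\in L^2\) we have \(\langle Cg,g\rangle=\langle g,C^*g\rangle=-\langle g,Cg\rangle\), so \(\langle Cg,g\rangle=0\). Expanding \(\|(\eta I-C)g\|_2^2=\eta^2\|g\|_2^2-2\eta\langle Cg,g\rangle+\|Cg\|_2^2\) then gives the stated identity directly. In particular \(\|(\eta I-C)g\|_2\ge\eta\|g\|_2\), so \(\eta I-C\) is injective and has closed range: if \((\eta I-C)g_n\) is Cauchy, then \((g_n)\) is Cauchy, and we can pass to the limit.

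\emph{Step 2: invertibility.} To show the range is dense, note that its orthogonal complement is \(\ker((\eta I-C)^*)=\ker(\eta I+C)\). The same computation with \(-C\) (also skew-adjoint) gives \(\|(\eta I+C)g\|_2\ge\eta\|g\|_2\), so this kernel is trivial. Hence \(\eta I-C\) is a bijection with bounded inverse of norm at most \(\eta^{-1}\).

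\emph{Step 3: the formula for the inverse.} We have \(\eta^2I-C^2=(\eta I-C)(\eta I+C)=(\eta I+C)(\eta I-C)\). Both factors commute and are invertible by Steps 1--2, so \(\eta^2I-C^2\) is invertible. Alternatively, \(-C^2=C^*C\) is positive semidefinite, so \(\eta^2I-C^2\ge\eta^2 I\). Since \(\eta I+C\) commutes with \(\eta^2I-C^2\), it also commutes with its inverse, and
\[
(\eta I-C)(\eta I+C)(\eta^2I-C^2)^{-1}=(\eta^2I-C^2)(\eta^2I-C^2)^{-1}=I .
\]
A two-sided inverse is unique, so \((\eta I-C)^{-1}=(\eta I+C)(\eta^2I-C^2)^{-1}\).

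No step is a real obstacle. The only care needed is in Step 2, where surjectivity must be justified (via the adjoint kernel) rather than assumed from injectivity, since \(C\) is not assumed to be compact.
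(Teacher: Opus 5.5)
Your proposal is correct and follows essentially the same route as the paper: the norm identity from $\langle Cg,g\rangle=0$, closed range together with the trivial kernel of the adjoint $\eta I+C$ to get surjectivity, and the factorization $\eta^2I-C^2=(\eta I-C)(\eta I+C)$ for the inverse formula. The only cosmetic difference is that your primary argument for invertibility of $\eta^2I-C^2$ uses the two commuting invertible factors, whereas the paper uses the bound $\eta^2I-C^2\ge\eta^2I$, which you also give as an alternative.
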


We then introduce a lemma on the convergence of the Neumann series, the proof can be found in~\cite[\S5: 5.6.9. Neumann Series Expansion Theorem] {Kutateladze96}. For completeness, we give the proof of it in Appendix~\ref{app:Neumann}.
\begin{lemma}
\label{lem:Neumann}
Let \(T\) be a bounded real operator and let \(\omega>0\). If \(\rho(\omega T)<1\), then $(I-\omega T)^{-1}=\sum_{n\ge0}(\omega T)^n$
with convergence in operator norm.
\end{lemma}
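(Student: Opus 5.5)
The plan is to use Gelfand's spectral radius formula, $\rho(S)=\lim_{n\to\infty}\|S^n\|^{1/n}$, applied to the bounded real operator $S:=\omega T$ on $L^2$. By hypothesis $\rho(S)<1$. Fix a number $q$ with $\rho(S)<q<1$. By Gelfand's formula there is $N$ such that $\|S^n\|\le q^n$ for all $n\ge N$. Hence
\[
\sum_{n\ge0}\|S^n\|\le \sum_{n<N}\|S^n\|+\sum_{n\ge N}q^n<\infty .
\]
So the Neumann series $\sum_{n\ge0}S^n$ converges absolutely in the Banach algebra of bounded operators on $L^2$. Because that space is complete, the partial sums $R_m:=\sum_{n=0}^{m}S^n$ converge in operator norm to a bounded operator $R$.

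Next I identify $R$ with the inverse of $I-S$. The telescoping identity
\[
(I-S)R_m=R_m(I-S)=I-S^{m+1}
\]
holds for every $m$. Since $\|S^{m+1}\|\to0$ by the bound above, letting $m\to\infty$ and using continuity of multiplication in operator norm gives
\[
(I-S)R=R(I-S)=I .
\]
Thus $I-\omega T$ is invertible with $(I-\omega T)^{-1}=R=\sum_{n\ge0}(\omega T)^n$, and the convergence is in operator norm.

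The only point needing care is the scalar field. Gelfand's formula is usually stated for complex Banach algebras, while here $T$ acts on real $L^2$. I would handle this by passing to the complexification $T_{\mathbb C}$ on $L^2([0,1];\mathbb C)$. It satisfies $\|T_{\mathbb C}^n\|=\|T^n\|$, which holds on Hilbert spaces, or one can simply use the equivalence of the two norms up to a factor of $2$. The paper's $\rho$ is understood as the spectral radius of $T_{\mathbb C}$, so $\lim\|S^n\|^{1/n}=\rho(S)$ still holds. Everything else is routine, so this bookkeeping about the real scalar field is the main, mild, obstacle.
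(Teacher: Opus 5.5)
Your proposal is correct and follows essentially the same route as the paper's proof. Both choose $q$ with $\rho(S)<q<1$, use Gelfand's formula to get $\|S^n\|\le q^n$ eventually, conclude absolute convergence in the complete space of bounded operators, and identify the limit as the inverse via the telescoping identity $(I-S)R_m=R_m(I-S)=I-S^{m+1}$. Your remark on complexification matches the paper's convention $\rho(T):=\rho(T_{\mathbb C})$, which the paper uses implicitly when it invokes the spectral-radius formula.
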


Finally, we record the inverse formula for a rank-one perturbation.
Since its proof is short, we include it here.  
\begin{lemma}
\label{lem:rank-one}
Let \(h,g\in L^2\), \(\theta\in\mathbb R\), and define
$$
(h\otimes g)v=\langle v,g\rangle h.
$$
Then \(I-\theta\,h\otimes g\) is invertible if and only if $
1-\theta\langle h,g\rangle\neq0.$
In this case,
\[
(I-\theta\,h\otimes g)^{-1}
=
I+\frac{\theta}{1-\theta\langle h,g\rangle}\,h\otimes g.
\]
\end{lemma}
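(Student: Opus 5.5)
We need to prove Lemma~\ref{lem:rank-one}, the inverse formula for \(I-\theta\,h\otimes g\). The plan is to verify the proposed inverse by direct composition and, for the converse, to exhibit a kernel vector when the scalar vanishes.

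\textbf{Step 1: a composition identity.} For any scalar \(c\), compute \((I-\theta\,h\otimes g)(I+c\,h\otimes g)\). Expanding gives
\[
I+(c-\theta)\,h\otimes g-\theta c\,(h\otimes g)^2 .
\]
The key observation is that
\[
(h\otimes g)^2 v=\langle v,g\rangle\langle h,g\rangle h ,
\]
that is, \((h\otimes g)^2=\langle h,g\rangle\, h\otimes g\). The product therefore equals
\[
I+\bigl(c-\theta-\theta c\langle h,g\rangle\bigr)\,h\otimes g .
\]

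\textbf{Step 2: the case \(1-\theta\langle h,g\rangle\neq0\).} Choosing
\[
c=\frac{\theta}{1-\theta\langle h,g\rangle}
\]
makes the coefficient vanish, because \(c\bigl(1-\theta\langle h,g\rangle\bigr)=\theta\). The same computation in the reverse order gives the left-inverse identity. Hence the stated operator is a bounded two-sided inverse.

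\textbf{Step 3: the case \(1-\theta\langle h,g\rangle=0\).} Then \(\theta\neq0\), and we must have \(h\neq0\), since otherwise \(\langle h,g\rangle=0\). Now
\[
(I-\theta\,h\otimes g)h=h-\theta\langle h,g\rangle h=0 .
\]
So the operator has a nontrivial kernel and is not invertible.

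No step is a real obstacle. The only point needing care is the idempotent-like identity \((h\otimes g)^2=\langle h,g\rangle\,h\otimes g\), together with checking both one-sided inverses; alternatively one notes that \(h\otimes g\) has rank one.
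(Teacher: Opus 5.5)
Your proposal is correct and follows the same approach as the paper: you verify the inverse directly using $(h\otimes g)^2=\langle h,g\rangle\,h\otimes g$ and the choice $c=\theta/(1-\theta\langle h,g\rangle)$, and for the converse you show $h\neq0$ lies in the kernel. The only difference is that you check the reverse composition separately, whereas the paper notes that the two factors commute.
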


\begin{proof}
Let \(T=h\otimes g\) and \(c=\langle h,g\rangle\). Then
$T^2=\langle h,g\rangle T=cT.$

If \(1-\theta c\neq0\), set \(\beta=\theta/(1-\theta c)\). Since
\(T^2=cT\),
$
(I-\theta T)(I+\beta T)
=
I+(\beta-\theta)T-\theta\beta cT
=
I.$
The two factors commute, so \(I-\theta T\) is invertible with
inverse \(I+\beta T\).

Conversely, if \(1-\theta c=0\), then \(h\neq0\) and
$
(I-\theta T)h
=
h-\theta\langle h,g\rangle h
=
(1-\theta c)h
=
0.
$
Thus \(I-\theta T\) is not injective and hence not invertible.
\end{proof}

\subsection{Outward-oriented even paths and spectral moments}\label{subsec:moments}
For a tournamenton \(W\), write \(W=\frac12+U\) as kernels. We identify a square-integrable kernel with
the Hilbert--Schmidt operator it defines on \(L^2[0,1]\). Under this
identification, set
$C=2U$ and $K=2W,$
so that $K=J+C,$
where $(Jg)(x)=\langle g,\one\rangle\one(x)$ and $\one(x)=1.$
The kernel \(U\) is antisymmetric and takes values in
\([-1/2,1/2]\). Thus \(C\) is a compact skew-adjoint operator: $C^*=-C.$
The operator $A=-C^2=C^*C$ is compact, self-adjoint, and positive semidefinite. The operator $K$ is compact and positive. The following sharp spectral-radius bound from
\cite[Proposition~2.3]{Grzesik+23} plays a crucial role in the proof of this paper. 

\begin{theorem}[Proposition~2.3 in~\cite{Grzesik+23}]
\label{thm:published-radius}
If \(U:[0,1]^2\to[-1/2,1/2]\) is an antisymmetric kernel, then the
spectral radius of the operator \(U^2\) is at most \(1/\pi^2\).
\end{theorem}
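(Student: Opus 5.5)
The plan is to reduce the claim to an operator-norm bound $\|U\|\le 1/\pi$, and then to bound the relevant bilinear form pointwise in $U$ followed by a Fourier-series estimate on the circle.

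\textbf{Reduction to an eigenpair.} Since $U$ is antisymmetric, $U^*=-U$, so $U^2=-U^*U$ is compact, self-adjoint and negative semidefinite. Hence $\rho(U^2)=\|U^*U\|=\|U\|^2$. If this quantity is zero there is nothing to prove. Otherwise, by the spectral theorem for the compact positive semidefinite operator $U^*U$, there is an eigenvalue $\lambda^2=\rho(U^2)$ with $\lambda>0$ and an eigenfunction $f$ satisfying $U^*Uf=\lambda^2 f$ and $\|f\|_2=1$. Put $g=Uf/\lambda$. Then
\[
\|g\|_2^2=\frac{\langle U^*Uf,f\rangle}{\lambda^2}=1
\qquad\text{and}\qquad
\langle Uf,g\rangle=\lambda .
\]
It therefore suffices to show that $|\langle Uf,g\rangle|\le 1/\pi$ whenever $\|f\|_2=\|g\|_2=1$.

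\textbf{Eliminating $U$.} Using antisymmetry, symmetrize the bilinear form:
\[
\langle Uf,g\rangle=\frac12\iint U(x,y)\bigl(g(x)f(y)-g(y)f(x)\bigr)\,dx\,dy .
\]
Since $|U|\le 1/2$, this gives
\[
|\langle Uf,g\rangle|\le \frac14\iint \bigl|g(x)f(y)-f(x)g(y)\bigr|\,dx\,dy .
\]
Now write $(f(x),g(x))=r(x)(\cos\theta(x),\sin\theta(x))$ in polar form, with $r\ge0$ measurable and $\int r^2=\|f\|_2^2+\|g\|_2^2=2$. Then
\[
\bigl|g(x)f(y)-f(x)g(y)\bigr|=r(x)r(y)\,\bigl|\sin(\theta(x)-\theta(y))\bigr| .
\]

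\textbf{Fourier step.} Use the classical expansion
\[
|\sin\varphi|=\frac2\pi-\frac4\pi\sum_{m\ge1}\frac{\cos(2m\varphi)}{4m^2-1},
\]
which converges absolutely and uniformly. Expanding $\cos(2m(\theta(x)-\theta(y)))$ as the real part of $e^{2im\theta(x)}e^{-2im\theta(y)}$ and integrating against $r(x)r(y)$ term by term gives
\[
\iint r(x)r(y)\bigl|\sin(\theta(x)-\theta(y))\bigr|
=\frac2\pi\Bigl(\int r\Bigr)^2-\frac4\pi\sum_{m\ge1}\frac{\bigl|\int r\,e^{2im\theta}\bigr|^2}{4m^2-1}.
\]
Every term in the sum is nonnegative, so the left side is at most $\frac2\pi(\int r)^2$. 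By Cauchy--Schwarz, $(\int r)^2\le\int r^2=2$, so the left side is at most $4/\pi$.

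\textbf{Conclusion.} Combining the steps, $\lambda=\langle Uf,g\rangle\le\frac14\cdot\frac4\pi=\frac1\pi$, and hence $\rho(U^2)=\lambda^2\le 1/\pi^2$.

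The main obstacle is finding the right pointwise-in-$U$ relaxation followed by the sharp inequality for $\iint r r|\sin|$. The remaining points are routine but should be checked: term-by-term integration is justified by uniform convergence and $r\in L^1$; $\theta$ can be chosen measurably, for example via $\operatorname{atan2}$; and the eigenfunction exists because $U^*U$ is compact.
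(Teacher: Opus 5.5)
Your argument is correct. It necessarily takes a different route from the paper, because the paper gives no proof of this statement: it is imported from Proposition~2.3 of Grzesik et al.\ and used only to get $\sigma(A)\subseteq[0,4/\pi^2]$ for $A=-C^2=4U^*U$.

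Your chain of steps checks out line by line:
$\rho(U^2)=\|U\|^2$ because $U^2=-U^*U$ is self-adjoint; you symmetrize $\langle Uf,g\rangle$ via antisymmetry; you relax $|U|\le 1/2$ pointwise; you write $(f,g)=r(\cos\theta,\sin\theta)$; and every nonconstant Fourier coefficient of $|\sin\varphi|$ is negative, so dropping the terms $\bigl|\int r e^{2im\theta}\bigr|^2$ only helps you. Fubini is harmless since $U$ is bounded and $f,g\in L^1[0,1]$, and the series for $|\sin|$ is absolutely summable.

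What your version buys is a short, self-contained proof of exactly what the paper needs, namely the operator-norm bound $\|C\|=2\|U\|\le 2/\pi$. It also gives a transparent equality analysis. Equality forces $r$ constant, $\int r e^{2im\theta}=0$ for all $m\ge1$, and essentially $U(x,y)=\frac12\operatorname{sgn}\sin(\theta(x)-\theta(y))$. Taking $\theta(x)=\pi x$ gives $U(x,y)=\frac12\operatorname{sgn}(x-y)$, the transitive tournamenton. Its eigenvalues are $\pm i/(\pi(2k+1))$ for $k\ge0$, which shows the constant $1/\pi^2$ is sharp. The citation, of course, buys brevity.

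One small simplification: you do not need the eigenpair or compactness. The identity $\|U\|=\sup_{\|f\|_2=\|g\|_2=1}\langle Uf,g\rangle$ already feeds directly into your bound.
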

Since \(C=2U\), Theorem~\ref{thm:published-radius} implies
\(\sigma(A)\subseteq[0,4/\pi^2]=[0,a]\), where \(a=4/\pi^2\).

For an integer \(s\ge 1\), let \(Q_{2s}\) denote the orientation of
the \(2s\)-edge path in which every edge is directed from the
central vertex toward one of the two endpoints. Thus \(Q_{2s}\)
consists of two directed paths of length \(s\) sharing the same
initial vertex. In this subsection, we will introduce some key properties of the homomorphism density of $Q_{2s}$, which can also be found in~\cite[\S2]{Grzesik+23}.

\begin{figure}[htbp]
\centering

% Q_2
\begin{minipage}[c]{0.18\textwidth}
\centering
\begin{tikzpicture}[
  vertex/.style={circle, fill=black, inner sep=0pt, minimum size=3pt,
                 outer sep=1pt},
  edge/.style={-{Stealth[length=2mm]}, thick},
  lab/.style={font=\small}
]
\node[vertex, label={[lab]below:$c$}]  (c)  at (0,0) {};
\node[vertex, label={[lab]above:$l_1$}] (l1) at (-0.7,0.7) {};
\node[vertex, label={[lab]above:$r_1$}] (r1) at (0.7,0.7) {};

\draw[edge] (c) -- (l1);
\draw[edge] (c) -- (r1);
\end{tikzpicture}\\[1.5mm]
$Q_2$
\end{minipage}%
\hspace{1em}
% Q_4
\begin{minipage}[c]{0.26\textwidth}
\centering
\begin{tikzpicture}[
  vertex/.style={circle, fill=black, inner sep=0pt, minimum size=3pt,
                 outer sep=1pt},
  edge/.style={-{Stealth[length=2mm]}, thick},
  lab/.style={font=\small}
]
\node[vertex, label={[lab]below:$c$}]    (c)  at (0,0) {};
\node[vertex, label={[lab]above:$l_1$}]  (l1) at (-0.65,0.6) {};
\node[vertex, label={[lab]above:$l_2$}]  (l2) at (-1.3,1.2) {};
\node[vertex, label={[lab]above:$r_1$}]  (r1) at (0.65,0.6) {};
\node[vertex, label={[lab]above:$r_2$}]  (r2) at (1.3,1.2) {};

\draw[edge] (c) -- (l1);
\draw[edge] (l1) -- (l2);
\draw[edge] (c) -- (r1);
\draw[edge] (r1) -- (r2);
\end{tikzpicture}\\[1.5mm]
$Q_4$
\end{minipage}%
\hspace{1em}
% Q_{2s}
\begin{minipage}[c]{0.42\textwidth}
\centering
\begin{tikzpicture}[
  vertex/.style={circle, fill=black, inner sep=0pt, minimum size=3pt,
                 outer sep=1pt},
  edge/.style={-{Stealth[length=2mm]}, thick},
  lab/.style={font=\small}
]
\node[vertex, label={[lab]below:$c$}]    (c)  at (0,0) {};

% left arm: first two vertices
\node[vertex, label={[lab]above:$l_1$}]  (l1) at (-0.55,0.5) {};
\node[vertex, label={[lab]above:$l_2$}]  (l2) at (-1.1,1.0) {};
% left ellipsis
\node[lab] (ldots) at (-1.85,1.7) {$\cdots$};
% left last vertex
\node[vertex, label={[lab]above:$l_s$}]  (ls) at (-2.6,2.4) {};

% right arm: first two vertices
\node[vertex, label={[lab]above:$r_1$}]  (r1) at (0.55,0.5) {};
\node[vertex, label={[lab]above:$r_2$}]  (r2) at (1.1,1.0) {};
% right ellipsis
\node[lab] (rdots) at (1.85,1.7) {$\cdots$};
% right last vertex
\node[vertex, label={[lab]above:$r_s$}]  (rs) at (2.6,2.4) {};

% left edges
\draw[edge] (c) -- (l1);
\draw[edge] (l1) -- (l2);
\draw[edge, dashed] (l2) -- (ldots);
\draw[edge, dashed] (ldots) -- (ls);
% right edges
\draw[edge] (c) -- (r1);
\draw[edge] (r1) -- (r2);
\draw[edge, dashed] (r2) -- (rdots);
\draw[edge, dashed] (rdots) -- (rs);
\end{tikzpicture}\\[1.5mm]
$Q_{2s}$
\end{minipage}

\caption{The paths \(Q_2\), \(Q_4\), and \(Q_{2s}\). All edges are directed
from the central vertex \(c\) toward the two endpoints.}
\label{fig:Q-paths-labeled}
\end{figure}

We now relate the homomorphism density of \(Q_{2s}\) in an
antisymmetric kernel \(U\) to the \(L^2\)-norm of the iterated
operator \(U^s\) applied to the constant function \(\one\). Let the
central vertex of \(Q_{2s}\) be labelled \(0\), and let the two arms
be labelled \(1,2,\dots,s\) and \(1',2',\dots,s'\), respectively,
with edges directed away from \(0\). Then, by the definition of the
homomorphism density,
\[
 t(Q_{2s},U)
 =
 \int_{[0,1]^{2s+1}}
 \prod_{i=0}^{s-1} U(x_i,x_{i+1})
 \prod_{j=0}^{s-1} U(y_j,y_{j+1})
 \,dx_0\,dx_1\cdots dx_s\,dy_1\cdots dy_s,
\]
where we set \(x_0=y_0\) as the common central variable, \(x_1,\dots,x_s\)
along the first arm, and \(y_1,\dots,y_s\) along the second arm.

For fixed \(x_0\), the integral over the first arm is exactly the function \((U^s\mathbf1)(x_0)\) applied to the constant
function:
\[
 \int_{[0,1]^s}
 \prod_{i=0}^{s-1} U(x_i,x_{i+1})
 \,dx_1\cdots dx_s
 =
 (U^s\one)(x_0),
\]
where \(\one\) denotes the function identically equal to \(1\).
Similarly, the integral over the second arm gives another factor
\((U^s\one)(x_0)\). Hence $t(Q_{2s},U)
 =
 \int_0^1 (U^s\one)(x_0)^2\,dx_0
 =
 \|U^s\one\|_2^2.$

For the normalized kernel \(C=2U\), we therefore define
\[
d_s(W)
:=\|C^s\one\|_2^2
=4^s\|U^s\one\|_2^2
=4^s t(Q_{2s},U)
=4^s t(Q_{2s},W-\frac{1}{2}).
\]
In particular, \(d_s(W)\ge0\).

We now express \(d_s(W)\) as a spectral moment. Since \(C^*=-C\), we have $d_s(W)=\langle C^s\one,C^s\one\rangle
=\langle \one,(C^*)^sC^s\one\rangle
=\langle \one,A^s\one\rangle .$
By Theorem~\ref{thm:published-radius}, $\sigma(A) \subseteq [0,a]$. By taking $f=\lambda^s$ in Lemma~\ref{lem:spectral-integration}, there is a probability measure \(\mu_A\) on \([0,a]\),
induced by the constant function \(\one\), such that for $s\ge 1$,
\[
d_s(W)=\int_0^a \lambda^s\,d\mu_A(\lambda).
\]
The following lemma from Grzesik, Il'kovi\v{c}, Kielak and Kr\'a\v{l}~\cite[Lemma 2.4 and Lemma 3.2]{Grzesik+23} is useful when bounding $d_s(W)$.

\begin{lemma}[Lemma 2.4 and Lemma 3.2 in \cite{Grzesik+23}]
\label{lem:Q-density-moments}
For $m\ge s\ge1,$ $$0\le d_m(W)\le a^{m-s}d_s(W)$$
and for $s\ge1,$ $$d_s(W)\le \frac13 a^{s-1}.$$
\end{lemma}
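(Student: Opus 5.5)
We need to prove Lemma~\ref{lem:Q-density-moments}: for $m\ge s\ge 1$, $0\le d_m(W)\le a^{m-s}d_s(W)$, and $d_s(W)\le \frac13 a^{s-1}$.

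\textbf{First inequality.} We use the spectral representation $d_s(W)=\int_0^a\lambda^s\,d\mu_A(\lambda)$ obtained above, which relies on Lemma~\ref{lem:spectral-integration} and on $\sigma(A)\subseteq[0,a]$ from Theorem~\ref{thm:published-radius}. On $[0,a]$ we have
\[
0\le\lambda^m=\lambda^{m-s}\lambda^s\le a^{m-s}\lambda^s .
\]
Since $\mu_A$ is a nonnegative measure, integrating gives $0\le d_m(W)\le a^{m-s}d_s(W)$ directly.

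\textbf{Second inequality: reduction.} By the first inequality with $m=s$ and $s=1$, we get $d_s(W)\le a^{s-1}d_1(W)$. It therefore suffices to show $d_1(W)\le \frac13$. Here
\[
d_1(W)=\|C\one\|_2^2=\int_0^1\Bigl(\int_0^1 2U(x,y)\,dy\Bigr)^2dx .
\]
The quantity $g(x)=\int_0^1 2U(x,y)\,dy=2\int_0^1 W(x,y)\,dy-1$ is the normalized out-degree excess of $x$, and the goal becomes $\int g^2\le\frac13$. This is the limit form of the classical fact that the sum of squared score deviations of a tournament is maximized by the transitive tournament. For the transitive tournamenton, $g(x)=2x-1$ up to sign, and $\int_0^1(2x-1)^2\,dx=\frac13$.

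\textbf{Second inequality: rearrangement step.} Let $w(x)=\int_0^1 W(x,y)\,dy$ and let $F$ be the distribution function of $w$. Antisymmetry gives the Landau-type constraint: for every measurable set $S$,
\[
\int_S w\ge \int_{S\times S}W=\frac{|S|^2}{2}.
\]
Applying this to the superlevel sets of $w$ shows that the decreasing rearrangement $w^*$ of $w$ is majorized by $t\mapsto 1-t$; concretely, $\int_0^t w^*\le t-\frac{t^2}{2}$ for all $t$, with equality at $t=1$ since $\int w=\frac12$. The map $\phi(u)=(2u-1)^2$ is convex, so Karamata's inequality in its continuous, Hardy--Littlewood--P\'olya form yields
\[
\int_0^1\phi(w)\le\int_0^1\phi(1-t)\,dt=\frac13 .
\]

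This majorization step is the main technical obstacle. One must justify the continuous majorization argument, namely that the cumulative bound for $w^*$ on superlevel sets implies the inequality for convex functions. A clean way is to write $\int\phi(w^*)=\int\phi'(w^*)\,dw^*$-type telescoping via integration by parts and use the monotonicity of $\phi'$. Alternatively, one can approximate by finite tournaments through Proposition~\ref{prop:equivalence-prelim}-style sampling and invoke the discrete score-sequence result.
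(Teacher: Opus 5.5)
Your proof is correct. The paper does not prove this lemma itself. It imports both inequalities from Grzesik, Il'kovi\v{c}, Kielak and Kr\'a\v{l}, having just derived $d_s(W)=\int_0^a\lambda^s\,d\mu_A(\lambda)$ with $\mu_A$ a probability measure on $[0,a]$.

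Your first part is exactly the one-line consequence of that representation. Your reduction of the second inequality to $d_1(W)\le\frac13$ via $d_s(W)\le a^{s-1}d_1(W)$ is the natural one. What you add is a self-contained proof of $d_1(W)\le\frac13$, which the paper simply takes from the literature.

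One step of that proof is stated in the wrong direction. The bound $\int_S w\ge|S|^2/2$, applied to superlevel sets, only gives $\int_0^t w^*\ge t^2/2$. To get $\int_0^t w^*\le t-\frac{t^2}{2}$, you can do either of the following:
\begin{itemize}
\item apply the bound to the complementary sublevel set and use $\int_0^1 w=\frac12$;
\item use the companion bound $\int_S w\le \frac{|S|^2}{2}+|S|(1-|S|)$, which comes from $W\le 1$ on $S\times S^c$.
\end{itemize}
With that fixed, your integration-by-parts justification of the Hardy--Littlewood--P\'olya step goes through. This is because $h=\phi'(w^*)$ is monotone and bounded, and $D(t)=\int_0^t\bigl((1-u)-w^*(u)\bigr)\,du$ is nonnegative and vanishes at $t=0,1$.

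The majorization machinery is heavier than necessary, though. Since $\int_0^1 w=\frac12$, one has $d_1(W)=4\int_0^1 w^2-1=4\,t(Q_2,W)-1$. Your finite-tournament alternative then follows immediately from Proposition~\ref{prop:equivalence-prelim} with $H=Q_2$ and $\xi=\frac13$. Indeed, $n^3\, t(Q_2,T)=\sum_v d^+(v)^2\le 2\binom n3+\binom n2<\frac{n^3}{3}$, where the middle inequality comes from counting transitive triples as $\sum_v\binom{d^+(v)}{2}\le\binom n3$.

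An even more direct route needs no rearrangement at all. For fixed $x,y,z$, the quantity $W(x,y)W(x,z)+W(y,x)W(y,z)+W(z,x)W(z,y)$ is affine in each of $W(x,y),W(x,z),W(y,z)$ separately. It is at most $1$ at every $0/1$ vertex, since a $3$-vertex tournament has at most one vertex beating both others. Integrating over $[0,1]^3$ gives $3\,t(Q_2,W)\le1$, i.e.\ $d_1(W)\le\frac13$.
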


Among $d_s(W)$ for $s \ge 1$, $d_1(W)$ plays a particularly important role in the proof.
By the normalization \(W=\frac12+U\) and \(C=2U\),
$(C\one)(x)
=
2\int_0^1 W(x,y)\,dy-1.$
Hence
\[
d_1(W)
=
\|C\one\|_2^2
=
4\int_0^1
\left(
\int_0^1 W(x,y)\,dy-\frac12
\right)^2\,dx.
\]
Thus \(d_1(W)/4\) is exactly the \(L^2\)-variance of the outdegree
function $x\longmapsto \int_0^1 W(x,y)\,dy.$ 
We can use \(d_1(W)\) to quantify the deviation of the out-degree function from \(1/2\): \(d_1(W)=0\) exactly when \(W\) is \emph{regular} (i.e. $\int_0^1 W(x,y)\,dy=1/2$ for almost every $x$), while small \(d_1(W)\) means that its out-degree function is close to \(1/2\) in \(L^2\).

Finally, we give the following simple lemma for the oriented tree density in regular tournaments.
\begin{lemma}
\label{lem:regularity}
If \(W\) is regular and \(T\) is an oriented tree, then
$
t(T,W)=2^{-|E(T)|}.
$
\end{lemma}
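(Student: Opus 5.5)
Write $W=\tfrac12+U$ with $U$ antisymmetric. The plan is to expand the product in the definition of $t(T,W)$ over subsets of edges and show that only the empty subset contributes:
\[
t(T,W)=\int\prod_{(v,w)\in E(T)}\Bigl(\tfrac12+U(x_v,x_w)\Bigr)\prod_{u}dx_u=\sum_{F\subseteq E(T)}2^{-(|E(T)|-|F|)}\,t\bigl(T\langle F\rangle,U\bigr).
\]
The term $F=\emptyset$ gives exactly $2^{-|E(T)|}$. It therefore suffices to prove that $t(T\langle F\rangle,U)=0$ for every nonempty $F$.

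For nonempty $F$, the spanning subgraph $T\langle F\rangle$ is a forest, and its density factorizes over connected components. Isolated vertices contribute a factor $1$. So it is enough to find one component $S$ with at least one edge and show $t(S,U)=0$. Take such a component $S$, which is an oriented tree, and pick a leaf $\ell$ of $S$ with unique neighbour $p$.

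Integrate out $x_\ell$ first, holding all other variables fixed; by Fubini this is legitimate because $U$ is bounded. If the edge is $(p,\ell)$, integrating gives the factor $\int_0^1U(x_p,y)\,dy=\int_0^1W(x_p,y)\,dy-\tfrac12$, which vanishes for almost every $x_p$ by regularity. If the edge is $(\ell,p)$, the factor is $\int_0^1U(y,x_p)\,dy=-\int_0^1U(x_p,y)\,dy$ by antisymmetry, which again vanishes almost everywhere. The remaining integrand is bounded, so the whole integral is $0$, giving $t(S,U)=0$ and hence $t(T\langle F\rangle,U)=0$.

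No step is a real obstacle. The only care needed is justifying Fubini, which follows from boundedness of $U$, and observing that in-degree regularity follows from out-degree regularity through $W(x,y)+W(y,x)=1$. Combining the steps gives $t(T,W)=2^{-|E(T)|}$.
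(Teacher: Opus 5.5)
Your proof is correct. It rests on the same fact as the paper's proof, namely that the row and column integrals of a regular $W$ equal $\tfrac12$ almost everywhere, but it organizes the argument differently.

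The paper works directly with $W$. It integrates out a leaf of $T$ against a row or column integral of $W$, which gives a factor $\tfrac12$, and then inducts by stripping leaves one at a time, collecting one factor $\tfrac12$ per edge. You instead expand $W=\tfrac12+U$ over edge subsets $F$ and read off $2^{-|E(T)|}$ from the term $F=\emptyset$. You then kill every other term with a single leaf integration $\int_0^1U(x_p,y)\,dy=0$. No induction is needed, because one vanishing factor suffices.

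The paper's route is shorter and self-contained. Yours matches the edge-subset expansion behind Lemma~\ref{lem:interval-expansion} and proves slightly more: $t(S,U)=0$ for every oriented forest $S$ with at least one edge. When $T$ is a path, this is just the observation that $C\one=0$ forces $d_s(W)=0$ for all $s\ge1$, so \eqref{eq:interval-expansion} collapses to its empty-family term.
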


\begin{proof}
Choose a leaf \(v\) of \(T\), and let \(u\) be its neighbour. If the
edge is directed from \(v\) to \(u\), then for each fixed value of
the variable assigned to \(u\), the integral over the leaf variable
is $\int_0^1 W(y,u)\,dy.$

Since \(W\) is regular and \(W(u,y)+W(y,u)=1\), both the row and
column integrals of \(W\) are \(1/2\). In particular,
\[
\int_0^1 W(y,u)\,dy=\frac12.
\]
The same conclusion holds if the edge is directed from \(u\) to
\(v\), using $\int_0^1 W(u,y)\,dy=\frac12.$

Thus, regardless of the orientation of the leaf edge, deleting the
leaf and integrating its variable contributes exactly a factor
\(1/2\). Iterating this leaf-removal operation until one vertex
remains gives $t(T,W)=2^{-|E(T)|}.$
\end{proof}
\subsection{The expansion over separated even intervals}

We now derive the exact expansion of the normalized path density in
terms of separated even edge intervals. This expansion will be used
both in the small-$d_1$ regime and in the large-$d_1$ regime.

An \emph{edge interval} is a set of consecutive positions in
\(\{1,\dots,k\}\). Two edge intervals are called \emph{compatible}
if they are disjoint and at least one edge position between them
belongs to neither interval. Thus two compatible intervals are
separated by at least one unused edge. Generally, a family of edge intervals is compatible if every two distinct members
are compatible.

\begin{lemma}
\label{lem:interval-expansion}
For every \(\vec{x}\in\{-1,1\}^k\) and every tournamenton \(W\),
\begin{equation}
 2^k t(P_k(\vec{x}),W)
 =
 \sum_{\cF}
 \prod_{I\in\cF}
 (-1)^{|I|/2}
 \left(\prod_{i\in I}x_i\right)
 d_{|I|/2}(W),
 \label{eq:interval-expansion}
\end{equation}
where \(\cF\) ranges over all compatible families of even edge
intervals, including the empty family.
\end{lemma}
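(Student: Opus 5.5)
We would prove \eqref{eq:interval-expansion} by a transfer-operator computation along the path. Write \(K=2W=J+C\), and set \(K^{(1)}=K\) and \(K^{(-1)}=K^*=J-C\); the second identity uses \(J^*=J\) and \(C^*=-C\). Scanning the path from \(v_0\) to \(v_k\), an edge with \(x_i=1\) contributes the factor \(2W(x_{v_{i-1}},x_{v_i})\), and an edge with \(x_i=-1\) contributes \(2W(x_{v_i},x_{v_{i-1}})\), which is the kernel of \(K^*\). Integrating out the variables in order gives
\[
2^k t(P_k(\vec x),W)=\bigl\langle \one, K^{(x_1)}K^{(x_2)}\cdots K^{(x_k)}\one\bigr\rangle .
\]
Each factor equals \(J+x_iC\), so it remains to expand this product.

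Expanding the product, every factor is either \(J\) or \(x_iC\). Let \(S\subseteq[k]\) be the set of positions where we choose \(C\). Its maximal runs of consecutive positions are intervals, and distinct runs are separated by at least one position carrying a \(J\). Since \(J=\one\otimes\one\), every occurrence of \(J\) splits the inner product. The term indexed by \(S\) therefore factorizes as
\[
\prod_{I} \Bigl(\prod_{i\in I}x_i\Bigr)\langle \one, C^{|I|}\one\rangle ,
\]
where \(I\) runs over the maximal intervals of \(S\) and each remaining \(J\) contributes the factor \(\langle\one,\one\rangle=1\). For the boundary runs, the outer vectors \(\one\) play the role that a \(J\) would play.

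Next we evaluate \(\langle \one, C^m\one\rangle\). When \(m\) is odd this is \(0\), because \(\langle\one,C^m\one\rangle=\langle (C^*)^m\one,\one\rangle=(-1)^m\langle\one,C^m\one\rangle\). When \(m=2s\) is even,
\[
\langle\one,C^{2s}\one\rangle=(-1)^s\langle (C^*)^s\one,\ldots\rangle=(-1)^s\langle C^s\one,C^s\one\rangle=(-1)^s d_s(W),
\]
using \(\langle\one,C^sC^s\one\rangle=\langle (C^*)^s\one,C^s\one\rangle\) and \((C^*)^s=(-1)^sC^s\).

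Finally, sets \(S\) whose maximal runs are all even correspond bijectively to compatible families of even edge intervals. The empty set \(S\) gives the term \(1\), which matches the empty family. Every other \(S\) contributes \(0\), because it has an odd run. Combining these facts yields \eqref{eq:interval-expansion}. The only delicate step is the first one: we must check that reversed edges give exactly the adjoint kernel, and that the boundary factorization via \(J\) is correct. Everything after that is bookkeeping.
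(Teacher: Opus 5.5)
Your proof is correct. It follows the same skeleton as the paper: expand each edge factor $2W=1+2U$ (your $K=J+C$), index terms by the positions carrying the $U$-part, factor over maximal runs, and evaluate each run. Where you differ is in how a single run is evaluated.

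The paper works with subgraph densities $t(P_k(\vec x)\langle F\rangle,U)$ and multiplies them over vertex-disjoint components. It then cites the edge-reversal rule of Grzesik et al.: reversing an edge multiplies the $U$-density by $-1$, and odd paths have $U$-density zero. With this it reduces each even component to $Q_{2s}$, tracking the parity of the number $m(I)$ of reversals, and uses the two-arm computation $t(Q_{2s},U)=\|U^s\one\|_2^2$.

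You instead encode a reversed edge as the adjoint operator $K^*=J-C$. This has three effects:
\begin{itemize}
\item the sign $\prod_{i\in I}x_i$ comes out of the product immediately;
\item the factorization over runs is just the rank-one identity $J=\one\otimes\one$;
\item both the vanishing of odd runs and the value $\langle\one,C^{2s}\one\rangle=(-1)^s d_s(W)$ follow in one line from $C^*=-C$.
\end{itemize}

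Your route is self-contained, with no external reversal proposition and no bookkeeping of $m(I)$. It also matches the operator formalism used later in Section~\ref{sec:large}, where $Z_n$ for the directed path is written as $\langle\one,K^n\one\rangle$. The paper's route instead keeps the identity tied to the combinatorial meaning of $d_s(W)$ as a normalized $Q_{2s}$-density.

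One small slip: in your display the factor $(-1)^s$ is placed too early. It should read
\[
\langle\one,C^{2s}\one\rangle=\langle (C^*)^s\one,C^s\one\rangle=(-1)^s\|C^s\one\|_2^2 .
\]
The justification you give right after the display is the correct one.
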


\begin{proof}
Write \(W=\frac12+U\) with \(U\) antisymmetric. Expanding every edge
factor in the defining integral of \(t(P_k(\vec{x}),W)\) gives
\[
 t(P_k(\vec{x}),W)
 =
 \sum_{F\subseteq E(P_k)}
 2^{-(k-|F|)}
 t(P_k(\vec{x})\langle F\rangle,U),
\]
where \(P_k(\vec{x})\langle F\rangle\) is the spanning subgraph of the
oriented path with edge set \(F\).

The nontrivial connected components of \(P_k(\vec{x})\langle F\rangle\)
are the maximal consecutive intervals contained in \(F\). Distinct
maximal intervals have disjoint vertex sets and are separated by at
least one unselected edge, so their densities multiply:
\[
 t(P_k(\vec{x})\langle F\rangle,U)
 =
 \prod_I t(P_k(\vec{x})[I],U),
\]
where the product is over the maximal intervals of \(F\) and $P_k(\vec{x})[I]$ denotes the oriented subpath induced on the edge interval \(I\).

We now evaluate the weight of a component occupying an even
interval \(I\) of length \(2s\). By~\cite[Proposition~2.1 and 2.2]{Grzesik+23}, reversing one edge of an oriented
graph changes its density in the antisymmetric kernel \(U\) by a
factor of \(-1\). Every orientation of a path with an odd number of
edges has zero \(U\)-density. Thus we only need to consider even components.

Recall that \(Q_{2s}\) is the orientation of the
\(2s\)-edge path with all edges directed from the central vertex
toward the two endpoints defined in subsection~\ref{subsec:moments}. Its sign vector
\((x_1,\dots,x_{2s})\) satisfies \(\prod_{i=1}^{2s} x_i = (-1)^s.\) For a component $I$ of even length \(2s\), let \(m(I)\) denote the number of edge reversals needed to transform
the orientation \((x_i)_{i\in I}\) into \(Q_{2s}\). Formally, if an
oriented path is obtained from \(Q_{2s}\) by reversing exactly
\(m(I)\) edges, then its density in \(U\) differs from
\(t(Q_{2s},U)\) by the factor \((-1)^{m(I)}\). Since each reversal
changes the sign product of the interval by \(-1\), the parity of 
$m(I)$ satisfies
 $(-1)^{m(I)}
 =(-1)^s \prod_{i\in I} x_i.$
Using \(t(Q_{2s},U)=\|U^s\one\|_2^2\) and
\(d_s(W)=4^s t(Q_{2s},U)\), we obtain
\[
 2^{2s} t(P_k(\vec{x})[I],U)
 =
 (-1)^s
 \left(\prod_{i\in I}x_i\right)
 d_s(W).
\]

Finally, the maximal components of an edge set \(F\) form a
compatible family of intervals. Conversely, every compatible family
of intervals is the family of maximal components of its union.
Removing the odd components proves the expansion
\eqref{eq:interval-expansion}.
\end{proof}

The rest of the paper is organized as follows.

From Section~\ref{sec:small} to Section~\ref{sec:proof-main}, we prove Theorem~\ref{thm:main}.  
We split the proof into two regimes by the value of $d_1(W)$.
In Section~\ref{sec:small} we deal with the case \(d_1(W)\) is small. 
In this case the density of the oriented path can be controlled by $d_1(W)$ when the number of direction flips is small compared with the length of the path.
In Section~\ref{sec:large} we deal with the case \(d_1(W)\) is large. 
We first prove an exponential decay for consistently directed paths using the spectral methods. 
Then we analyze the decomposition at direction flips and apply the decay to each constant-direction piece, while the flips contribute only a bounded factor. 
This yields the desired bound when the path is long enough compared with the number of flips.
Finally, Section~\ref{sec:proof-main}
assembles the two regimes and proves Theorem~\ref{thm:main}.
In Section~\ref{sec:concluding}, we briefly discuss the relationship between direction flips and the tournament Sidorenko property and pose some open problems.

\section{The Small-\(d_1\) Regime}
\label{sec:small}

In this section we treat the regime where \(d_1(W)\) is small.
The following constants and notations
\[
a=\frac4{\pi^2},\qquad
\gamma=1-\frac{a}{1-a^2},\qquad
B_0=\frac{8}{(49/64-a)^2},\qquad
D=\frac4{(1-a)^2},
\]
and
\[
\tau=\frac1{125},\qquad
\vartheta=\gamma-B_0\tau>0, \qquad \delta=d_1(W).
\] are necessary in our proofs.
The main Lemma of this section is listed as follows:  
\begin{lemma}
\label{prop:small-final}
Assume \(\vec{x}\in\{-1,1\}^k\) has \(r\) direction flips with \(k\ge 
K_0(r):=1+\left\lfloor\frac{Dr+1}{\vartheta}\right\rfloor
\). 
If \(0<d_1(W)\le\tau\), then
$
2^k t(P_k(\vec{x}),W)<1.
$
\end{lemma}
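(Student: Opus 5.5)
The plan is to run the expansion of Lemma~\ref{lem:interval-expansion} through a one-sided linear recursion along the path, rather than treating it as a perturbation around the empty family. A perturbative argument cannot work here: $k$ is unbounded, so $k\delta$ need not be small, and the number of compatible families grows exponentially in $k$.

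\textbf{Step 1: the recursion.} For $0\le j\le k$, let $F_j$ be the right-hand side of \eqref{eq:interval-expansion} restricted to families of intervals inside $\{1,\dots,j\}$. Set $F_0=1$ and $F_{-1}:=1$. Split according to whether position $j$ is unused or is the last edge of an interval $I=\{j-2s+1,\dots,j\}$; in the latter case position $j-2s$ must be unused. This gives
\[
F_j=F_{j-1}+\sum_{s\ge1,\;2s\le j}(-1)^s\Bigl(\prod_{i=j-2s+1}^{j}x_i\Bigr)d_s(W)\,F_{j-2s-1},
\]
with $F_k=2^k t(P_k(\vec{x}),W)$.

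\textbf{Step 2: the single-interval weights.} By Lemma~\ref{lem:Q-density-moments} with $s=1$ we have $0\le d_s(W)\le a^{s-1}\delta$.
\begin{itemize}
\item If the window of length $2s$ ending at $j$ contains no flip, then $\prod x_i=\pm1$ raised to an even power, so the weight is $(-1)^s d_s$. The terms with $s$ odd are $\le0$; the $s=1$ term is exactly $-\delta$. The terms with $s$ even are positive and at most $a^{s-1}\delta$, and summing $a+a^3+\cdots$ gives at most $\tfrac{a}{1-a^2}\delta$.
\item Hence at a position $j$ whose preceding window is flip-free, the net linear correction is at most $-\gamma\delta$. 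This is exactly the constant $\gamma=1-a/(1-a^2)$.
\item The only other case is a window containing a flip. Its total absolute weight is bounded by a geometric sum of size $O(\delta/(1-a))$. Summed over the positions affected by one flip, this contributes at most $D\delta$ per flip, giving the constant $D=4/(1-a)^2$.
\end{itemize}

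\textbf{Step 3: the ratio invariant.} The $F_{j-2s-1}$ are not all equal to $F_{j-1}$. I would prove by induction on $j$ that the ratios $\rho_j:=F_j/F_{j-1}$ stay in a window $[\tfrac{49}{64},\,1+O(\tau)]$ whenever $\delta\le\tau$. Then
\[
F_{j-2s-1}=F_{j-1}\prod_{i=j-2s}^{j-1}\rho_i^{-1},
\]
and the deviation of each such ratio from $1$ is $O(\delta)$. This deviation is where the quadratic loss enters, of order $B_0\delta^2\le B_0\tau\delta$. The denominator $(49/64-a)^2$ in $B_0$ suggests summing
\[
\sum_s s\,a^{s-1}\bigl(\tfrac{64}{49}\bigr)^{2s}
\]
as a geometric-type series. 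The resulting bounds are:
\begin{itemize}
\item at flip-free positions, $\rho_j\le 1-(\gamma-B_0\tau)\delta=1-\vartheta\delta$;
\item at all positions, $\rho_j\le 1+O(\delta)$;
\item over the positions influenced by each flip, a cumulative extra factor of at most $1+D\delta$.
\end{itemize}

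\textbf{Step 4: the product bound.} Multiply the ratios. Near the initial positions there is an additional $O(\delta)$ boundary loss, which I expect is the source of the $+1$ in $K_0(r)$. This gives
\[
F_k\le \exp\bigl(\delta\,(Dr+1-\vartheta k)\bigr).
\]
Since $k\ge K_0(r)>(Dr+1)/\vartheta$ and $\delta>0$, we conclude $F_k<1$.

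\textbf{Main obstacle.} I expect the hardest step to be Step 3. The difficulty is closing the induction with the explicit constants $\tau=1/125$ and $B_0$: the lower bound $\rho\ge 49/64$ must be self-reinforcing, and flips must not push the ratios outside the window. My first attempt would be to show $|\rho_j-1|\le c\,\delta$ with an explicit $c$, using that the absolute sum of all weights at a position is at most $\delta/(1-a)\le\tau/(1-a)$. I would then check numerically that this is compatible with $49/64$ when $\tau=1/125$.
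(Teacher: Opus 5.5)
Your proposal is correct and follows the paper's proof essentially step for step: the prefix recurrence, an inductive two-sided bound $|R_j-1|\le 4\delta$ (so $R_j\ge 7/8$ and $(1-4\delta)^2\ge 49/64$, which is where the denominator of $B_0$ comes from, rather than from a lower bound $\rho_j\ge 49/64$), the refined one-sided estimate $R_j-1\le\sum_{\max I=j}w_I+B_0\delta^2$, and then $\log Z_k\le\sum_j(R_j-1)$. The only organizational difference is that the paper bounds the resulting linear term $\sum_I w_I\le(-\gamma k+1+Dr)\delta$ globally by comparison with the constant orientation, rather than position by position; this makes your ``$D\delta$ per flip'' accounting precise: each flip lies in at most $2s-1$ windows of length $2s$, and each such window's weight differs from its constant-orientation value by at most $2d_s\le 2a^{s-1}\delta$.
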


The proof uses the interval expansion to write the normalized density as a sum over compatible even intervals, and then builds the path from left to right via a prefix recurrence. We will show that for \(d_1(W)\le \tau\), The single-interval contribution is linear in the spectral moments \(d_s(W)\). The length-two intervals provide the main negative term, while the longer single intervals are controlled by the geometric decay \(d_s(W)\le a^{s-1}d_1(W)\). Direction flips perturb this linear contribution by only \(O(rd_1(W))\). When the path has few direction flips, this linear term dominates, yielding Lemma~\ref{prop:small-final}.

We remark that the idea of studying the dominance of $d_1(W)$ in our approach is similar in spirit to the method of He et al.~\cite[Section 3]{HeManiNieTungWei25+} for local TS/TAS properties. They assume a sufficiently small spectral radius for $U$, which is stronger than our assumption that \(d_1(W)\) is small. 

 We first define the normalized density of prefix of the oriented path $P_k(\vec{x})$.

\begin{definition}
 For each \(1\le j\le k\), the $j$-prefix normalized density is defined as 
\[Z_j(\vec{x})=2^j t(P_j(x_1,\dots,x_j),W),\]
where \(P_j(x_1,\dots,x_j)\) is the oriented path determined by the
first \(j\) coordinates of \(\vec{x}\). 
In particular, we set 
\(Z_0(\vec{x})=Z_{-1}(\vec{x})=1\).   
\end{definition}

By Lemma~\ref{lem:interval-expansion}, 
\(Z_k(\vec{x})\) is a sum over all families of pairwise disjoint, non-adjacent even-length edge intervals. To better estimate the contribution of intervals, for each even interval \(I\subseteq\{1,\dots,k\}\),we define its weight by:
\begin{equation}\label{eq:wI}
w_I=(-1)^{|I|/2}
\left(\prod_{i\in I}x_i\right)
d_{|I|/2}(W).
\end{equation}
Hence, Lemma~\ref{lem:interval-expansion} gives the following expansion for $j\geq 1$:

\begin{equation}
Z_j(\vec{x})
=
\sum_{\cF}
\prod_{I\in\cF}w_I,
\label{eq:Zk-expansion}
\end{equation}
where the sum is over all compatible families of even intervals
contained in \(\{1,\dots,j\}\).

The next lemma gives the basic recurrence for \(Z_j(\vec{x})\), which is the starting point for all later estimates.

\begin{lemma}
\label{lem:prefix-recurrence}
For every \(1\le j\le k\),
\begin{equation}
Z_j(\vec{x})
=
Z_{j-1}(\vec{x})
+
\sum_{s=1}^{\lfloor j/2\rfloor}
w_{[j-2s+1,j]}\,Z_{j-2s-1}(\vec{x}).
\label{eq:prefix-rec}
\end{equation}
\end{lemma}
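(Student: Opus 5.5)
The recurrence follows from the expansion \eqref{eq:Zk-expansion} by conditioning on whether the last edge position \(j\) is covered by an interval of the family. Fix \(1\le j\le k\) and split the compatible families \(\cF\) of even intervals contained in \(\{1,\dots,j\}\) into two classes, according to whether \(j\) lies in some member of \(\cF\).

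In the first class, \(j\) lies in no member of \(\cF\). Then every member of \(\cF\) is contained in \(\{1,\dots,j-1\}\). Conversely, every compatible family in \(\{1,\dots,j-1\}\) is also a compatible family in \(\{1,\dots,j\}\) that avoids \(j\). By \eqref{eq:Zk-expansion} applied at \(j-1\), these families contribute exactly \(Z_{j-1}(\vec{x})\). When \(j=1\), the only such family is the empty one, contributing \(1=Z_0(\vec{x})\), which agrees with the convention.

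In the second class, \(j\) lies in some member of \(\cF\). Intervals are consecutive sets of positions and \(j\) is maximal, so that member is \(I=[j-2s+1,j]\) for a unique \(s\) with \(1\le s\le\lfloor j/2\rfloor\). Compatibility requires every other member to be disjoint from \(I\) and separated from it by an unused edge. Since all members lie to the left of \(I\), position \(j-2s\) must be unused (when it exists), and the remaining members form a compatible family in \(\{1,\dots,j-2s-1\}\). Conversely, any such family together with \(I\) is compatible. Hence \(\cF\mapsto\cF\setminus\{I\}\) is a bijection onto the compatible families in \(\{1,\dots,j-2s-1\}\). The weight factors as \(w_I\prod_{I'\in\cF\setminus\{I\}}w_{I'}\), so summing gives \(w_{[j-2s+1,j]}\,Z_{j-2s-1}(\vec{x})\).

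The only delicate point is the boundary cases, which the conventions handle. If \(j-2s-1\ge1\), we apply \eqref{eq:Zk-expansion} directly. If \(j-2s-1\in\{0,-1\}\), the only remaining family is the empty one, with total weight \(1\), matching \(Z_0(\vec{x})=Z_{-1}(\vec{x})=1\). We also need that \(Z_m(\vec{x})\), defined as a prefix density, equals the family sum in \eqref{eq:Zk-expansion} for each \(m\ge1\). This holds by Lemma~\ref{lem:interval-expansion} applied to the prefix path \(P_m(x_1,\dots,x_m)\), whose interval weights coincide with the \(w_I\) for \(I\subseteq\{1,\dots,m\}\). Adding the two classes gives \eqref{eq:prefix-rec}.
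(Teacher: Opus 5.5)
Your proposal is correct and follows the paper's proof: you split compatible families according to whether some interval ends at $j$, then use the bijection $\cF\mapsto\cF\setminus\{[j-2s+1,j]\}$ onto compatible families in $\{1,\dots,j-2s-1\}$. Your explicit handling of the boundary cases via $Z_0(\vec{x})=Z_{-1}(\vec{x})=1$, and your check that the prefix densities satisfy the expansion \eqref{eq:Zk-expansion}, fill in details the paper leaves implicit.
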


\begin{proof}
We fix a compatible family \(\cF\). 
If no interval in \(\cF\) has the right endpoint \(j\), then \(\cF\) is entirely contained in
\(\{1,2,\ldots j-1\}\), and its weight is counted by \(Z_{j-1}(\vec{x})\). 
Otherwise, assume 
$I_{s,j}:=[j-2s+1,j]$
is the even interval which contains $j$ as the right endpoint. 
Since \(\cF\) is
compatible, other intervals must be disjoint from and non-adjacent to \(I_{s,j}\). 
Hence, all other intervals lie
in \(\{1,2,\ldots,j-2s-1\}\), and $\cF\setminus \{I_{s,j}\}$ forms a compatible family in $\{1,2,\ldots,j-2s-1\}$. 
Summing over all $\cF$ containing $I_{s,j}:=[j-2s+1,j]$, we will get $\sum_{\cF: I_{s,j}\in \cF} \prod_{I\in \cF\setminus \{I_{s,j}\}}w_I=Z_{j-2s-1}(\vec{x}),$
and 
\[Z_{j}(\vec{x})=Z_{j-1} + \sum_{s=1}^{\lfloor j/2\rfloor}\sum_{\cF: I_{s,j}\in \cF} w_{I_{s,j}}\left(\prod_{I\in \cF\setminus \{I_{s,j}\}}w_I\right) = Z_{j-1}(\vec{x})
+
\sum_{s=1}^{\lfloor j/2\rfloor}
w_{[j-2s+1,j]}\,Z_{j-2s-1}(\vec{x}).\]
\end{proof}

We continue our proof by controlling the growth of density as we recursively delete the last vertex from $P_k(\vec{x})$. 
For $0\le j\le k$, if $Z_{j-1}(\vec x)\ne0$, then we can define the ratio $R_j(\vec x)$ as 
$$
R_j(\vec x)=\frac{Z_j(\vec x)}{Z_{j-1}(\vec x)}.
$$
The next lemma shows that, when \(d_1(W)\) is sufficiently small, the ratio $R_j(\vec x)$ stays close to 1, and further, the ratio is controlled by the sum of $d_1(W)$ and some $w_I$s.

\begin{lemma}
\label{lem:prefix-ratios}
Assume \(0<\delta\le\tau\). 
Then for every
\(0\le j\le k\), \(Z_j(\vec{x})>0\) and $|R_j(\vec{x})-1|\le4\delta$. 
Moreover, 
\begin{equation}\label{xir}
R_j(\vec{x})-1
\le
\sum_{\substack{I:|I| \equiv 0\mod 2\\\max I=j}}w_I
+B_0\delta^2.    
\end{equation}

Here \(\max I\) is the right
endpoint of \(I\).
\end{lemma}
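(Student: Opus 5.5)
Proof plan: strong induction on \(j\), proving at each stage that \(Z_j>0\) and \(|R_j-1|\le 4\delta\). The base cases \(Z_0=Z_{-1}=1\) are immediate, with \(R_0=1\) under the convention \(Z_{-1}=1\). For the inductive step, divide the recurrence of Lemma~\ref{lem:prefix-recurrence} by \(Z_{j-1}\):
\[
R_j-1=\sum_{s=1}^{\lfloor j/2\rfloor} w_{[j-2s+1,j]}\,\frac{Z_{j-2s-1}}{Z_{j-1}}.
\]
By the induction hypothesis, the ratio \(Z_{j-2s-1}/Z_{j-1}\) is a product of \(2s\) reciprocals \(1/R_i\). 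Each of these lies in \([1/(1+4\delta),1/(1-4\delta)]\), so
\[
\frac{Z_{j-2s-1}}{Z_{j-1}}\le (1-4\delta)^{-2s}.
\]
By Lemma~\ref{lem:Q-density-moments} and \eqref{eq:wI}, \(|w_I|\le d_{|I|/2}\le a^{s-1}\delta\). This gives
\[
|R_j-1|\le \delta\sum_{s\ge1}a^{s-1}(1-4\delta)^{-2s}=\frac{\delta(1-4\delta)^{-2}}{1-a(1-4\delta)^{-2}}.
\]
With \(\delta\le\tau=1/125\) we have \((1-4\delta)^{-2}=(125/121)^2\approx1.067\) and \(a\approx0.405\). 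The bound is then about \(1.067\delta/0.568\approx1.88\delta\le4\delta\). Since \(4\delta<1\), it follows that \(R_j>0\) and hence \(Z_j>0\), which closes the induction.

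For the refined inequality \eqref{xir}, write
\[
\frac{Z_{j-2s-1}}{Z_{j-1}}=1+\epsilon_s,\qquad R_j-1=\sum_s w_{[j-2s+1,j]}+\sum_s w_{[j-2s+1,j]}\,\epsilon_s.
\]
The remaining task is to show \(\sum_s |w_{[j-2s+1,j]}|\,|\epsilon_s|\le B_0\delta^2\). Since \(\epsilon_s\) is a product of \(2s\) factors, each within roughly \(4\delta\) of \(1\), we get
\[
|\epsilon_s|\le (1-4\delta)^{-2s}-1\le 2s\cdot 4\delta\,(1-4\delta)^{-2s}.
\]
One could also use the mean value bound \(\rho^{2s}-1\le 2s(\rho-1)\rho^{2s-1}\) with \(\rho=(1-4\delta)^{-1}\). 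Multiplying by \(|w|\le a^{s-1}\delta\) gives
\[
\sum_s |w_{[j-2s+1,j]}|\,|\epsilon_s|\le 8\delta^2\sum_{s\ge1} s\,a^{s-1}q^{s},\qquad q=(1-4\delta)^{-2}\le(125/121)^2.
\]
This is \(8\delta^2 q/(1-aq)^2\). The constant \(B_0=8/(49/64-a)^2\) suggests the intended estimate \(q\le 64/49\). Indeed \(aq\le a\cdot 64/49\) is equivalent to \(1-aq\ge \tfrac{64}{49}(49/64-a)\), and \(q/(1-aq)^2\) is then bounded by
\[
\frac{64}{49}\cdot\Bigl(\frac{49}{64}\Bigr)^2 (49/64-a)^{-2}\le (49/64-a)^{-2}.
\]
So \(q\le 64/49\) (i.e. \(1-4\delta\ge 7/8\)) is the convenient sufficient condition, and it holds since \(4\tau<1/8\). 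One can use this cruder bound throughout, including in the first part, for a uniform argument.

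I expect the main obstacle to be pure bookkeeping. We must make sure the product representation of \(Z_{j-2s-1}/Z_{j-1}\) uses only already-established ratios. The edge case \(j-2s-1=-1\) is handled by the convention \(Z_{-1}=Z_0=1\), which corresponds to \(R_0=1\). We must also keep the constants consistent so that the expansion error collapses to exactly \(B_0\delta^2\).
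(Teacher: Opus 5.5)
Your proposal is correct and follows essentially the same route as the paper. Both arguments run an induction on $j$ using the recurrence divided by $Z_{j-1}$, bound the product of $2s$ reciprocal ratios by $(1-4\delta)^{-2s}$ together with $|w_I|\le a^{s-1}\delta$, and finish the refined bound with a mean-value estimate for $(1-4\delta)^{-2s}-1$ summed against $s\,a^{s-1}$. Your estimate $8s\delta(1-4\delta)^{-2s}$ is slightly sharper than the paper's $8s\delta(1-4\delta)^{-(2s+1)}$, and either one yields exactly $B_0\delta^2$ once $1-4\delta\ge 7/8$ is used.
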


\begin{proof}
We first prove \(Z_j(\vec{x})>0\) and $|R_j(\vec{x})-1|\le 4 \delta $ simultaneously, by induction on \(j\). 
The base cases \(j=0,1\) is clear because
\(Z_{-1}(\vec{x})=Z_0(\vec{x})=Z_1(\vec{x})=1\),  $R_0(\vec{x})=Z_0(\vec{x})/Z_{-1}(\vec{x})=1$ and $R_1(\vec{x})=Z_1(\vec{x})/Z_0(\vec{x})=1$. 
Assume both
assertions hold for all $i<j$. 
Since \(Z_{j-1}(\vec{x})>0\), by dividing  \(Z_{j-1}(\vec{x})\), the equation \eqref{eq:prefix-rec} is turned to 
\begin{equation}\label{eq:R_j-1}
R_j(\vec{x})-1
=
\sum_{s=1}^{\lfloor j/2\rfloor}
w_{[j-2s+1,j]}
\left(\prod_{\ell=j-2s}^{j-1}R_\ell(\vec{x})^{-1}\right).
\end{equation}
By the induction hypothesis, each \(R_\ell(\vec{x})\) lies in
\([1-4\delta,1+4\delta]\) for $\ell\geq 0$.
Hence $
\left|\prod_{\ell=j-2s}^{j-1}R_\ell(\vec{x})^{-1}\right|
\le(1-4\delta)^{-2s}.$
Using \eqref{eq:wI} and Lemma~\ref{lem:Q-density-moments},
$|R_j(\vec{x})-1|
\le
\delta\sum_{s\ge1}a^{s-1}(1-4\delta)^{-2s}.$
By \(a<1/2\) and $1-4\delta\geq 1-4\tau>7/8$, the ratio
\(a/(1-4\delta)^2< 32/49<1\). 
Thus $|R_j(\vec{x})-1|
\le
\frac{\delta}{(1-4\delta)^2-a}
\le
\frac{\delta}{49/64-a}
\le
4\delta.$
Moreover, $R_j(\vec{x})\geq 1-4\delta>7/8$ implies $Z_j(\vec{x})=R_j(\vec{x})Z_{j-1}(\vec{x})>0$, which implies the two statements for $i=j.$

It remains to prove~\eqref{xir}.
By~\eqref{eq:R_j-1}, we have
\[
R_j(\vec{x})-1-\sum_{s=1}^{\lfloor j/2\rfloor}w_{[j-2s+1,j]}
=
\sum_{s=1}^{\lfloor j/2\rfloor}
w_{[j-2s+1,j]}
\left(\prod_{\ell=j-2s}^{j-1}R_\ell(\vec{x})^{-1}-1\right).
\]
Since $R_\ell(\vec{x})$ lies in
\([1-4\delta,1+4\delta]\) for each $\ell$, 
$
\left|\prod_{\ell=j-2s}^{j-1}R_\ell(\vec{x})^{-1}-1\right|\le(1-4\delta)^{-2s}-1\le 8s\delta(1-4\delta)^{-(2s+1)},$
where the last inequality follows from the mean-value theorem applied to $x\mapsto(1-x)^{-2s}$.
By \(|w_{[j-2s+1,j]}|\le\delta a^{s-1}\),
\((1-4\delta)^2-a\ge49/64-a\) and the equality \(\sum_{s\ge1}s a^{s-1}(1-4\delta)^{-(2s+1)}=(1-4\delta)/((1-4\delta)^2-a)^{2}\),
\[
\begin{aligned}
&\sum_{s=1}^{\lfloor j/2\rfloor}
w_{[j-2s+1,j]}
\left(\prod_{\ell=j-2s}^{j-1}R_\ell(\vec{x})^{-1}-1\right)\le
\delta\sum_{s=1}^{\lfloor j/2\rfloor}a^{s-1}
\left|\prod_{\ell=j-2s}^{j-1}R_\ell(\vec{x})^{-1}-1\right|\\
&\le
8\delta^2
\sum_{s=1}^{\lfloor j/2\rfloor}s a^{s-1}(1-4\delta)^{-(2s+1)}\le
8\delta^2
\sum_{s\ge1}s a^{s-1}(1-4\delta)^{-(2s+1)}
\\
&=
\frac{8\delta^2(1-4\delta)}{((1-4\delta)^2-a)^2}\le
\frac{8\delta^2}{(49/64-a)^2}
=
B_0\delta^2.
\end{aligned}
\]
Therefore, 
\begin{equation*}
R_j(\vec{x})-1\le \sum_{s=1}^{\lfloor j/2\rfloor}w_{[j-2s+1,j]}+B_0\delta^2\le \sum_{\substack{I:|I|\equiv0\mod 2\\\max I=j}}w_I+B_0\delta^2.
\end{equation*}
\end{proof}

\begin{corollary}
\label{cor:log-prefix-bound}
Under the assumptions of Lemma~\ref{lem:prefix-ratios},
\[
\log Z_k(\vec{x})
\le
\sum_{\substack{I\subseteq\{1,\ldots,k\}\\
I:|I|\equiv 0\mod 2}}w_I
+B_0k\delta^2.
\]
\end{corollary}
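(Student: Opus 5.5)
Telescoping the prefix ratios gives the bound at once. By Lemma~\ref{lem:prefix-ratios}, every $Z_j(\vec{x})$ with $0\le j\le k$ is positive, and so is $Z_{-1}(\vec{x})=1$. Hence every ratio $R_j(\vec{x})$ is defined and positive, and since $Z_0(\vec{x})=1$,
\[
Z_k(\vec{x})=\prod_{j=1}^{k}R_j(\vec{x}),\qquad
\log Z_k(\vec{x})=\sum_{j=1}^{k}\log R_j(\vec{x}).
\]

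Next we apply the elementary inequality $\log(1+u)\le u$, valid for $u>-1$. Here $u=R_j(\vec{x})-1$, and $u>-1$ holds because $R_j(\vec{x})\ge 1-4\delta>0$. Combined with \eqref{xir}, this gives for each $1\le j\le k$
\[
\log R_j(\vec{x})\le R_j(\vec{x})-1\le \sum_{\substack{I:|I|\equiv0\bmod 2\\ \max I=j}}w_I+B_0\delta^2 .
\]
The sum in \eqref{xir} runs over even intervals contained in $\{1,\dots,j\}$ with right endpoint $j$, namely $I=[j-2s+1,j]$ for $1\le s\le\lfloor j/2\rfloor$.

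Finally we sum over $j=1,\dots,k$. Each even interval $I\subseteq\{1,\dots,k\}$ has a unique right endpoint $\max I=j\in\{1,\dots,k\}$, so the union of the index sets over all $j$ is exactly the set of even intervals in $\{1,\dots,k\}$, each counted once. The error terms add up to $kB_0\delta^2$. This yields the claimed inequality.

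No step here is a real obstacle. The only points to check are that each $R_j(\vec{x})$ is positive, so that the logarithm is defined, and that the sum in \eqref{xir} contains exactly the intervals ending at $j$. Both follow directly from Lemma~\ref{lem:prefix-ratios} and its proof.
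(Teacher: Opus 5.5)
Your proposal is correct and matches the paper's proof step for step. The paper also writes $Z_k(\vec{x})=\prod_{j=1}^k R_j(\vec{x})$, uses $\log y\le y-1$ for $y>0$, applies \eqref{xir} to each $R_j(\vec{x})-1$, and sums over $j$, noting that each even interval has a unique right endpoint.
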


\begin{proof}
Since \(Z_k(\vec{x})=\prod_{j=1}^kR_j(\vec{x})\) and all
\(R_j(\vec{x})>0\), $\log Z_k(\vec{x})
=
\sum_{j=1}^k\log R_j(\vec{x})
\le
\sum_{j=1}^k(R_j(\vec{x})-1).$
This is because \(\log y\le y-1\) for \(y>0\). By
Lemma~\ref{lem:prefix-ratios}, $R_j(\vec{x})-1
\le
\sum_{\substack{I:|I| \equiv 0\mod 2\\\max I=j}}w_I+B_0\delta^2.$
Summing over \(j\), and using that every even interval has a unique
right endpoint proves this corollary.
\end{proof}

Compared with $R_j(\vec{x})\in[1-4\delta, 1+4\delta]$, the equation~\eqref{xir} gives a sharper estimation: the total error is linear in \(k\)
and quadratic in \(\delta\).
It remains to bound \(\sum_{I:|I| \equiv 0\mod 2} w_I\) for paths
with few direction flips.

\begin{lemma}
\label{lem:linear-bound}
If $\vec{x}\in \{-1,1\}^k$ has \(r\) direction flips, then
\[
\sum_{\substack{I:|I| \equiv 0\mod 2}}w_I
\le(-\gamma k+1+Dr)\delta.
\]
\end{lemma}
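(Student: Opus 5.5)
Write $\delta=d_1(W)$ and split the sum over even intervals by length. Each length-two interval $I=\{i,i+1\}$ has weight $w_I=-x_ix_{i+1}\delta$. There are $k-1$ such intervals. Exactly $r$ of them straddle a direction flip and have $x_ix_{i+1}=-1$; the remaining $k-1-r$ have $x_ix_{i+1}=1$. The length-two contribution is therefore
\[
\sum_{|I|=2}w_I=-(k-1-r)\delta+r\delta=(-k+1+2r)\delta .
\]
This is the main negative term. It remains to show that the longer intervals cost at most $\frac{a}{1-a^2}k\delta$, plus an error of order $r\delta$.

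For intervals of length $2s$ with $s\ge2$, first treat the flip-free ones. If $I$ contains no flip, meaning the edges of $I$ form a subset of one block, then $\prod_{i\in I}x_i=1$ and $w_I=(-1)^s d_s(W)$. Only even $s$ can then give a positive weight. The remaining intervals of length $2s$ straddle at least one flip. For each flip there are at most $2s-1$ intervals of length $2s$ containing both of its incident edges, so there are at most $(2s-1)r$ such intervals.

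We bound every positive weight using Lemma~\ref{lem:Q-density-moments}, $d_s(W)\le a^{s-1}\delta$, and use that there are at most $k$ intervals of each length. This gives
\[
\sum_{|I|\ge4}w_I\le \sum_{s\ge2,\ s\text{ even}} k\,a^{s-1}\delta+\sum_{s\ge2,\ s\text{ odd}}(2s-1)r\,a^{s-1}\delta .
\]
For odd $s$, the flip-free intervals contribute negatively and may be dropped. For even $s$, we use the crude bound that all $\le k$ intervals have weight at most $a^{s-1}\delta$; intervals straddling flips are included in this count, so no extra term is needed there. The first sum is
\[
k\delta\sum_{m\ge1}a^{2m-1}=k\delta\,\frac{a}{1-a^2},
\]
which matches $\gamma=1-\frac{a}{1-a^2}$ exactly. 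The second sum is $r\delta\sum_{s\ge3\text{ odd}}(2s-1)a^{s-1}$.

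It remains to check that $2r+r\sum_{s\ge3,\ s \text{ odd}}(2s-1)a^{s-1}\le Dr$ with $D=4/(1-a)^2$. Since
\[
\sum_{s\ge1}(2s-1)a^{s-1}=\frac{1+a}{(1-a)^2},
\]
the odd part with $s\ge3$ is at most $\frac{1+a}{(1-a)^2}-1$. The total coefficient of $r\delta$ is then at most $1+\frac{1+a}{(1-a)^2}\le\frac{4}{(1-a)^2}$, which holds because $a<1$. Combining the length-two contribution with these bounds yields $\sum_I w_I\le(-\gamma k+1+Dr)\delta$.

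The main point requiring care is the bookkeeping for the intervals straddling flips. One must not double count: flip-containing intervals of even $s$ are already covered by the $k$-per-length bound, and only those of odd $s$ need the separate $(2s-1)r$ count. The per-flip counting also needs care, since an interval of length $2s$ contains a given flip vertex in its interior for exactly $2s-1$ placements.
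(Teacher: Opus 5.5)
Your proof is correct and uses the same ingredients as the paper's proof: the bound $d_s(W)\le a^{s-1}\delta$, discarding the negative flip-free terms for odd $s$, and the count of at most $(2s-1)r$ intervals of length $2s$ containing a flip. The only difference is bookkeeping. The paper compares with the all-ones path and charges a two-sided error $4rs\,d_s(W)$ at every length $s$, whereas you compute the $s=1$ term exactly and charge flips only at odd $s\ge3$; this gives the slightly smaller coefficient $1+\frac{1+a}{(1-a)^2}\le D$.
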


\begin{proof}
We first estimate $\sum_{\substack{I:|I| \equiv 0\mod 2}}w_I$ for 
\(\vec{1}=(1,\dots,1)\).
Let $w_I^{\rm const}:=(-1)^{|I|/2} d_{|I|/2}(W)$ be the corresponding
weight of $I$. 
The
corresponding sum is
\[
\sum_{\substack{I:|I| \equiv 0 \mod 2}}w_I^{\rm const}
=
\sum_{s=1}^{\lfloor k/2\rfloor}
(-1)^s d_s(W)(k-2s+1).
\]
We discard all negative terms except for $s=1$, and use \(d_{2j}(W)\le a^{2j-1}\delta\). Then
\begin{equation}\label{eq:wconstant}
\sum_{\substack{I:|I| \equiv 0 \mod 2}}w_I^{\rm const}
\le-(k-1)\delta+k\sum_{j\ge1}a^{2j-1}\delta
=
(-\gamma k+1)\delta.
\end{equation}
To continue our proof, we compare the sum of $w_I$ of $\vec{x}$ with that of $\vec{1}.$
For each interval $I$ of length $2s$, $\prod_{i\in I}x_i=-1$ only if $I$ contains a direction flip.
On the other hand, for each direction flip, the number of $2s$-length interval containing it is at most $(2s-1)$.
Therefore, if \(\vec{x}\)
has at most \(r\) direction flips, then for every \(1\le s\le \lfloor k/2\rfloor\),
\begin{equation}\label{eq:compare s}
\left|
\sum_{\substack{|I|=2s}}\prod_{i\in I}x_i
-(k-2s+1)
\right|
\leq 2\cdot (2s-1)\cdot r
\le 4rs.
\end{equation}
Applying \eqref{eq:compare s} and the bound
\(d_s(W)\le a^{s-1}\delta\) gives
\begin{equation*}
\begin{aligned}
&\left|
\sum_{\substack{|I|=2s}}w_I
-\sum_{\substack{|I|=2s}}w_I^{\rm const}
\right|=
d_s(W)
\left|
\sum_{\substack{|I|=2s}}\prod_{i\in I}x_i
-(k-2s+1)
\right|\le
4rs\,d_s(W)
\le
4rs\,a^{s-1}\delta,
\end{aligned}
\end{equation*}
and 
\begin{equation}\label{eq:compare w}
\begin{aligned}
\left|
\sum_{I:|I| \equiv 0\mod 2}w_I
-\sum_{I:|I| \equiv 0\mod 2}w_I^{\rm const}
\right|
&\le
\sum_{s=1}^{\lfloor k/2\rfloor}
\left|
\sum_{|I|=2s}w_I
-\sum_{|I|=2s}w_I^{\rm const}
\right|
\le
\sum_{s\ge1} 4rs\,a^{s-1}\delta\\
&
=
4r\delta\sum_{s\ge1}s\,a^{s-1}
=
4r\delta\cdot \frac{1}{(1-a)^2}
=
Dr\delta,
\end{aligned}
\end{equation}
where we used the equality
\(\sum_{s\ge1}s a^{s-1}=(1-a)^{-2}\) for \(0<a<1\). Combining the equations \eqref{eq:wconstant} and \eqref{eq:compare w} yields
\[
\sum_{\substack{I:|I| \equiv0 \mod 2}}w_I
\le
(-\gamma k+1+Dr)\delta .
\]
\end{proof}

We are now ready to prove the main Lemma of this
section.

\begin{proof}[Proof of Lemma~\ref{prop:small-final}]
Set \(\delta=d_1(W)\). By Corollary~\ref{cor:log-prefix-bound}, Lemma~\ref{lem:linear-bound} and the condition $\delta\leq\tau$,
$$
\begin{aligned}
\log Z_k(\vec x)
&\le
(-\gamma k+1+Dr)\delta+B_0k\delta^2\le
(-\gamma k+1+Dr+B_0k\tau)\delta=
(-\vartheta k+1+Dr)\delta.
\end{aligned}
$$
The condition \(k\ge K_0(r)\) implies
\(k>(Dr+1)/\vartheta\), so
$-\vartheta k+1+Dr<0.$
Hence the right-hand side is strictly negative, which implies
\(Z_k(\vec{x})=2^k t(P_k(\vec{x}),W)\)<1.
\end{proof}

\section{The Large-\(d_1\) Regime}
\label{sec:large}

In this section we deal with the regime where \(d_1(W)>\tau\). 

Define the constants
\begin{equation}
 b=\frac{\tau}{1+a},\qquad
 \omega=1+\frac{9}{10} b,\qquad
 B_1=\frac1{1-\omega(1-b)},\qquad
 H=\frac{2\omega^4}{3(1-a\omega^2)^2}.
 \label{eq:large-constants}
\end{equation}
Since \(\tau=1/125\le1/32\) and $a=4/\pi^2$, we have \(a\omega^2<1\).

We will prove that 
\begin{lemma}
\label{cor:large-bound}
Assume $\vec{x}\in\{-1,1\}^k$ has $r$ direction flips with 
$
 k \ge K_1(r)
 :=
 1+\left\lfloor
 r+\frac{(r+1)\log B_1+r\log(1+H)}{\log \omega}
 \right\rfloor.$
If $d_1(W)>\tau$, then $
 2^k t(P_k(\vec{x}),W)< 1.$
\end{lemma}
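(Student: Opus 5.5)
We plan to write the path density as an iterated operator product and bound it through a weighted $L^2$ norm whose growth is controlled by the spectral radius. Set $K=2W=J+C$, so that $2^k t(P_k(\vec x),W)=\langle \one, K_{x_1}K_{x_2}\cdots K_{x_k}\one\rangle$, where $K_{1}=K$ and $K_{-1}=K^*=J-C$; reversing an edge corresponds to transposing its factor. Split $\vec x$ into its $r+1$ maximal constant-sign blocks of lengths $\ell_0,\dots,\ell_r$, with $\sum\ell_i=k$. On a block the factor is $K^{\ell}$ or $(K^*)^{\ell}$, and the block boundaries are the flips.

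The first step is exponential decay for consistently directed pieces. We will show that when $d_1(W)>\tau$, the operator $\omega K$ satisfies a resolvent-type bound $\sum_{n\ge0}\langle \one,(\omega K)^n\one\rangle$-like quantities $\le B_1$. Concretely, we aim for $\|K^{n}\one\|$-type quantities to be at most $B_1\omega^{-n}$, and we expect this to come from $\rho(K)\le 1-b$ (equivalently $\rho(\omega K)\le \omega(1-b)<1$).

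To obtain the spectral radius bound, we will use Lemma~\ref{thm:positive-compact}. Since $K$ is positive and compact, the bound $\rho(K)$ is attained by a nonnegative eigenfunction $f$ with $Kf=\rho f$. We then pair with $\one$ and use antisymmetry: $\langle Kf,f\rangle=\langle f,\one\rangle^2$. We intend to show that a large $d_1$, that is, a non-regular out-degree, forces $\rho\le 1-b$ with $b=\tau/(1+a)$. This is the step we expect to be the main obstacle.

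As a plan for this step, we will write $(\eta I-C)$ with $\eta=\rho$. Applying Lemma~\ref{lem:skew-invertibility}, we get $f=\langle f,\one\rangle(\rho I-C)^{-1}\one$. Taking the inner product with $\one$ then gives
\[
1=\langle \one,(\rho I-C)^{-1}\one\rangle=\rho\langle \one,(\rho^2I+A)^{-1}\one\rangle=\int \frac{\rho}{\rho^2+\lambda}\,d\mu_A.
\]
Since $\lambda\le a$ and $\int\lambda\,d\mu_A=d_1>\tau$, this should yield $\rho\le 1-\tau/(1+a)$ after a convexity estimate. With the same representation, Lemmas~\ref{lem:Neumann} and \ref{lem:rank-one} applied to $I-\omega K=(I-\omega C)-\omega\,\one\otimes\one$ will give the resolvent bound with constant $B_1$.

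Next we handle the flips. At a flip the product switches between $K$ and $K^*$. We will bound the interface factor by $1+H$, controlling the cross terms through $d_s$ moments: expanding $K^*=K-2C$, the correction involves $\sum_s\omega^{2s}d_s\le\sum_s \omega^{2s}a^{s-1}/3$ by Lemma~\ref{lem:Q-density-moments}, which sums to something like $H$.

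Finally, we multiply the estimates over the blocks, interpreting them via a generating function in $\omega$. This gives
\[
2^k t(P_k(\vec x),W)\le \omega^{-(k-r)}B_1^{r+1}(1+H)^r,
\]
where the loss of one edge per flip accounts for the $k-r$. The right-hand side is less than $1$ exactly when $k\ge K_1(r)$.
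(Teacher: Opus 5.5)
\textbf{The directed-block step is sound.} Your first step is essentially the paper's Lemmas~\ref{lem:spectral-gap} and~\ref{lem:directed-decay}: Krein--Rutman gives a nonnegative eigenfunction, you derive $1=\int\rho/(\rho^2+\lambda)\,d\mu_A$, and then apply the Neumann series with the rank-one inverse for $I-\omega K=(I-\omega C)-\omega\,\one\otimes\one$. The chord bound for the convex integrand works and even gives $\rho<1-b$, slightly sharper than the paper's $\rho<1/\omega$. Note what this yields: $\langle\one,K^n\one\rangle\le B_1\omega^{-n}$, using that all terms of $\sum_n\omega^n\langle\one,K^n\one\rangle$ are nonnegative. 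It does not bound $\|K^n\one\|_2$ or operator norms.

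\textbf{The gap is the passage across flips.} The quantity $\langle\one,K^{\ell_0}(K^*)^{\ell_1}\cdots\one\rangle$ does not factor over blocks. Since only the scalars $\langle\one,K^n\one\rangle$ are controlled, multiplying block estimates requires an exact splitting of the inner product. Expanding $K^*=K-2C$ cannot supply this. Every resulting term is still an unbroken word in $K$ and $C$ between two copies of $\one$, such as $\langle\one,K^pCK^q\one\rangle$, and neither the directed bound nor the moments $d_s$ apply to it.

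\textbf{Missing idea: use the rank-one part.} Write $K=J+C$ and $K^*=J-C$ with $J=\one\otimes\one$, so that $\langle\one,XJY\one\rangle=\langle\one,X\one\rangle\langle\one,Y\one\rangle$; this is Lemma~\ref{lem:interval-expansion}. The paper then proceeds as follows.
\begin{itemize}
\item It marks the first edge after each flip and groups all terms by the family $\mathcal E$ of maximal $C$-runs meeting a marked edge.
\item All other marked edges, and the two edges flanking each run of $\mathcal E$, then carry $J$. The rest splits into at most $r+1$ constant-sign segments $S_j$, each contributing exactly $\langle\one,K^{|S_j|}\one\rangle\le B_1\omega^{-|S_j|}$ (note $\langle\one,(K^*)^n\one\rangle=\langle\one,K^n\one\rangle$).
\item A run of length $2s$ contributes $\pm d_s$ and removes $2s+2$ edges from the segments, giving weight $d_s\omega^{2s+2}$.
\item Each run is assigned to its leftmost marked edge, and the compatibility constraints are dropped, which is legitimate since all weights are nonnegative. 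This bounds the sum over $\mathcal E$ by $\prod_m\bigl(1+\sum_s 2s\,d_s\omega^{2s+2}\bigr)\le(1+H)^r$.
\end{itemize}

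Your interface sum $\sum_s\omega^{2s}d_s$ misses two factors. It omits $2s$, the number of placements of a length-$2s$ run through a fixed edge, and $\omega^2$, for the two flanking $J$-edges. This shows the decomposition has not actually been set up, so the bound $B_1^{r+1}(1+H)^r\omega^{r-k}$ is asserted rather than proved.
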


The proof has two parts. First, under the condition $d_1(W)>\tau$, we use the spectral method to prove exponential decay for consistently directed paths in Subsection~\ref{subsec:decay}. Second, in~\ref{subsec:decompositon}, we reduce a general path with $r$ flips to at most \(r+1\) consistently directed segments by removing one edge after each flip together with adjacent intervals. The directed decay then applies to each segment, and the removed part contributes only a factor depending on \(r\), which gives the desired bound.

\subsection{Exponential decay for consistently directed paths}\label{subsec:decay}

Let $W$ be a tournamenton. For the consistently directed \(n\)-edge path \(\vec P_n\), by the definition of $Z_k(\vec{x})$, 
 $Z_n(\vec{\bm{1}_n})=2^n t(\vec P_n(\vec{\bm 1_n}),W)
 =\langle\one,K^n\one\rangle,$
where \(K=2W\) and $\vec{\bm{1}_n}:=(1,1,...,1)\in \{-1,1\}^n$. Since \(W\ge0\), all \(Z_n\) are nonnegative. Recall that $C=2U$ and $A=-C^2$ is a compact, self-adjoint positive semidefinite operator. For
real \(z \ge 0\), define
\begin{equation}
 G(z)=\int_0^a\frac{d\mu_A(\lambda)}{1+\lambda z^2},
 \label{eq:G}
\end{equation}
where \(\mu_A\) is the probability measure derived by the spectral of $A$ from
Lemma~\ref{lem:spectral-integration}.

\begin{lemma}
\label{lem:spectral-gap}
If \(d_1(W)>\tau\), then the spectral radius \(\rho=\rho(K)\) of
\(K\) satisfies $\rho<\frac{1}{\omega}$.
\end{lemma}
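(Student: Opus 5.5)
The plan is to combine the Krein--Rutman theorem with the skew-adjoint resolvent formula, which turns the eigenvalue equation for \(K\) into a scalar equation involving \(G\), and then to use \(d_1(W)>\tau\) to show that this equation has no root \(z=1/\rho\) with \(z\le\omega\). If \(\rho=0\) there is nothing to prove, so assume \(\rho>0\).

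\textbf{Step 1 (eigenvector and scalar equation).} Since \(K=J+C\) is positive and compact, Lemma~\ref{thm:positive-compact} gives a nonzero \(f\ge0\) with \(Kf=\rho f\). This eigenvalue equation can be rewritten as \((\rho I-C)f=\langle f,\one\rangle\one\). Put \(c=\langle f,\one\rangle\).
\begin{itemize}
\item If \(c=0\), then \((\rho I-C)f=0\). By Lemma~\ref{lem:skew-invertibility}, \(\rho I-C\) is injective, so \(f=0\), a contradiction. Hence \(c\neq0\).
\item By Lemma~\ref{lem:skew-invertibility}, \(f=c(\rho I+C)(\rho^2I+A)^{-1}\one\), using \(-C^2=A\).
\item Pairing with \(\one\) and dividing by \(c\) gives
\[1=\rho\langle (\rho^2I+A)^{-1}\one,\one\rangle+\langle C(\rho^2I+A)^{-1}\one,\one\rangle .\]
\item The second term vanishes. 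Put \(g=(\rho^2I+A)^{-1}\one\). Since \(C\) commutes with \(A\) and \(C^*=-C\), we get \(\langle Cg,\one\rangle=\langle C(\rho^2+A)^{-1}\one,\one\rangle=-\langle g,C\one\rangle\). This equals \(-\langle (\rho^2+A)^{-1}C\one,\one\rangle=-\langle Cg,\one\rangle\), so it is zero.
\item Lemma~\ref{lem:spectral-integration}, applied to the continuous function \(\lambda\mapsto(\rho^2+\lambda)^{-1}\) on \([0,a]\), then gives
\[1=\int_0^a\frac{\rho}{\rho^2+\lambda}\,d\mu_A(\lambda).\]
\item Setting \(z=1/\rho\), this becomes \(zG(z)=1\).
\end{itemize}

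\textbf{Step 2 (monotonicity).} For each fixed \(\lambda\in[0,a]\), the map \(z\mapsto z/(1+\lambda z^2)\) has derivative \((1-\lambda z^2)/(1+\lambda z^2)^2\). This is nonnegative on \([0,1/\sqrt a]\), so \(zG(z)\) is nondecreasing there. Since \(a\omega^2<1\), \(\omega\) lies in this range. So if \(\rho\ge 1/\omega\), i.e.\ \(z\le\omega\), then \(1=zG(z)\le\omega G(\omega)\). It therefore suffices to prove \(\omega G(\omega)<1\).

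\textbf{Step 3 (using \(d_1>\tau\)).}
\begin{itemize}
\item The function \(\lambda\mapsto 1/(1+\lambda\omega^2)\) is convex on \([0,a]\), so it lies below its chord:
\[\frac{1}{1+\lambda\omega^2}\le 1-\frac{\omega^2}{1+a\omega^2}\lambda .\]
\item Integrate against the probability measure \(\mu_A\), and use \(\int\lambda\,d\mu_A=d_1(W)>\tau\). This gives \(G(\omega)<1-\tau\omega^2/(1+a\omega^2)\).
\item Since \(\omega\ge1\), we have \(\omega^2/(1+a\omega^2)\ge 1/(1+a)\). Hence \(G(\omega)<1-b\).
\item Therefore \(\omega G(\omega)<(1+\tfrac{9}{10}b)(1-b)<1\).
\end{itemize}
This contradicts the conclusion of Step~2, so \(\rho<1/\omega\).

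\textbf{Main obstacle.} The step I expect to need the most care is Step~1. One must justify that the cross term \(\langle C(\rho^2+A)^{-1}\one,\one\rangle\) vanishes, which rests on \(C\) commuting with functions of \(A=-C^2\). One must also check that the spectral measure \(\mu_A\) is supported in \([0,a]\) (Theorem~\ref{thm:published-radius}), so that both the monotonicity in Step~2 and the chord bound in Step~3 apply. The remaining computations are routine.
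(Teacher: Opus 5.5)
Your proof is correct and follows the paper's argument essentially step for step: the Krein--Rutman eigenvector, the skew-adjoint resolvent identity (with the vanishing cross term) reducing everything to $zG(z)=1$ for $z=1/\rho$, monotonicity of $zG(z)$ on $[0,\omega]$, and the bound $G(\omega)<1-b$ coming from $d_1(W)>\tau$. The only differences are cosmetic: you rule out $\langle f,\one\rangle=0$ via injectivity of $\rho I-C$ rather than via positivity of $f$. You also bound $G(\omega)$ directly by the chord inequality, which is equivalent to $\lambda\omega^2/(1+\lambda\omega^2)\ge\lambda\omega^2/(1+a\omega^2)$, whereas the paper bounds $G(1)$ and then uses that $G$ is non-increasing.
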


\begin{proof}
Recall that \(K=2W=J+C\), where \(J\) is the
rank-one operator \(Jg=\langle g,\one\rangle\one\) and \(C\) is
skew-adjoint. The kernel of \(K\) is nonnegative, so \(K\) is a
positive compact operator.

If \(\rho=0\), then \(\rho<1/\omega\) holds trivially.
Assume \(\rho>0\). 
By
Lemma~\ref{thm:positive-compact}, there exists a non-zero function \(f\ge0\) in $L^2$ with \(Kf=\rho f\). 
From \(K=J+C\) we get $(\rho I-C)f=Jf=\langle f,\one \rangle\one.$
By Lemma~\ref{lem:skew-invertibility}, \(\rho I-C\) is invertible and $(\rho I-C)^{-1}=(\rho I+C)(\rho^2I-C^2)^{-1}$.
So $f=\langle f,\one \rangle(\rho I-C)^{-1}\one.$
By $\langle f,\one \rangle>0$, we have
\begin{equation}\label{eq:1=<>}
1=\frac{1}{\langle f,\one \rangle}\langle \one, f \rangle=\langle\one,(\rho I-C)^{-1}\one\rangle.
\end{equation}
Since \(C^*=-C\), the operator \(C^2\) is self-adjoint.
Hence 
\((\rho^2I-C^2)^{-1}\) is self-adjoint and commutes with \(C\).
Therefore, 
$
\bigl(C(\rho^2I-C^2)^{-1}\bigr)^*
=
(\rho^2I-C^2)^{-1}C^*
=
-(\rho^2I-C^2)^{-1}C
=
-C(\rho^2I-C^2)^{-1},$ implying that \(C(\rho^2I-C^2)^{-1}\) is skew-adjoint. Thus, 
$
\langle \mathbf 1, C(\rho^2I-C^2)^{-1}\mathbf 1\rangle
=
-\langle \mathbf 1, C(\rho^2I-C^2)^{-1}\mathbf 1\rangle=0,
$
and further
\begin{equation}\label{eq:phoI-C}
\langle \mathbf 1,(\rho I-C)^{-1}\mathbf 1\rangle
=
\rho\langle \mathbf 1,(\rho^2I-C^2)^{-1}\mathbf 1\rangle .
\end{equation}
Because \(C^2=-A\), we have \(A\) is compact, positive semidefinite and self-adjoint. 
Writing \(\rho^2I-C^2=\rho^2I+A\) and taking $f=\frac{1}{\rho^2+\lambda}$ in Lemma~\ref{lem:spectral-integration},
we obtain
\begin{equation}\label{eq:phoI+A}
\langle
\one,(\rho^2I+A)^{-1}\one\rangle
=
\int_0^a
\frac{d\mu_A(\lambda)}{\rho^2+\lambda}.
\end{equation}
Combining \eqref{eq:1=<>}, \eqref{eq:phoI-C} and \eqref{eq:phoI+A}, $$1=\langle \mathbf 1,(\rho I-C)^{-1}\mathbf 1\rangle
=
\rho\langle \mathbf 1,(\rho^2I-C^2)^{-1}\mathbf 1\rangle=\rho \langle \one,(\rho^2I+A)^{-1}\one\rangle=\rho\int_0^a
\frac{d\mu_A(\lambda)}{\rho^2+\lambda}=\frac{1}{\rho}G(1/\rho).$$

We now analyze the behavior of $G(z)$ and $zG(z)$.
Using the definition of \(G\) and the fact that \(\mu_A\) is a probability measure,
$1-G(1)
=
\int_0^a \frac{\lambda}{1+\lambda}\,d\mu_A(\lambda).$
Since \(\lambda\le a\) on the support of \(\mu_A\), $\frac{\lambda}{1+\lambda}\ge\frac{\lambda}{1+a}.$
Hence
\begin{equation}\label{eq:G(1)vsb}
1-G(1)
\ge
\frac1{1+a}\int_0^a \lambda\,d\mu_A(\lambda)
=
\frac{d_1(W)}{1+a}
>
\frac{\tau}{1+a}
=
b,
\end{equation}
where the last inequality follows from our assumption $d_1(W)>\tau$. 
For \(0\le z\le \omega\), the derivative $
\frac{d}{dz}\left(\frac{z}{1+\lambda z^2}\right)
=
\frac{1-\lambda z^2}{(1+\lambda z^2)^2}$
is positive, since \(\lambda z^2\le a\omega^2<1\). Therefore $zG(z)$ is strictly increasing on \([0,\omega]\). On the other hand, for each \(\lambda\ge0\), the integrand \(1/(1+\lambda z^2)\) is non-increasing in \(z\ge0\), so \(G\) is non-increasing.
Using \eqref{eq:G(1)vsb}, we have
\(G(1)\le1-b\), and

\begin{equation}\label{eq:RG(R)<1}
\omega G(\omega)\le \omega G(1)\le \omega (1-b)=1-\frac{1}{10}b-\frac{9}{10}b^2<1.
\end{equation} 
This shows that for each $z\in [0,\omega],$
$zG(z)\leq \omega G(\omega)<1$.

Now suppose for contradiction that \(\rho\ge1/\omega\). 
Then
\(1/\rho\in(0,\omega]\), which means $(1/\rho)G(1/\rho)<1.$ But note that $1=\frac1\rho G(1/\rho),$ a
contradiction. Therefore \(\rho<1/\omega\).
\end{proof}

\begin{lemma}
\label{lem:directed-decay}
If \(d_1(W)>\tau\), then for every \(n\ge1\),
\[
 Z_n(\vec{\bm 1_n})\le B_1\omega^{-n}.
\]
The same bound holds for the consistently directed path with all
edges reversed.
\end{lemma}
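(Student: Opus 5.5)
The plan is a generating-function argument. Since $Z_n(\vec{\bm 1_n})=\langle\one,K^n\one\rangle\ge0$ for every $n$, it suffices to show
\[
\sum_{n\ge0}\omega^n\langle\one,K^n\one\rangle\le B_1 .
\]
Each nonnegative term is then at most the whole sum, which gives $\omega^n Z_n(\vec{\bm 1_n})\le B_1$.

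\textbf{Step 1: the sum is a resolvent.} By Lemma~\ref{lem:spectral-gap}, $\rho(\omega K)=\omega\rho<1$. Lemma~\ref{lem:Neumann} then gives $\sum_{n\ge0}(\omega K)^n=(I-\omega K)^{-1}$ in operator norm. Hence the series equals $\langle\one,(I-\omega K)^{-1}\one\rangle$, so it suffices to compute this resolvent quantity.

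\textbf{Step 2: rank-one factorization.} Write $K=J+C$ and take any $z>0$. The operator $I-zC=z(z^{-1}I-C)$ is invertible by Lemma~\ref{lem:skew-invertibility}. Put $h=(I-zC)^{-1}\one$. Since $J=\one\otimes\one$,
\[
I-zK=(I-zC)\bigl(I-z\,h\otimes\one\bigr).
\]
The skew-adjointness argument already used in the proof of Lemma~\ref{lem:spectral-gap} shows that
\[
\langle h,\one\rangle=\langle\one,(I+z^2A)^{-1}\one\rangle .
\]
By Lemma~\ref{lem:spectral-integration}, this equals $G(z)$.

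At $z=\omega$, inequality \eqref{eq:RG(R)<1} gives $1-\omega G(\omega)>0$. So Lemma~\ref{lem:rank-one} applies with $\theta=\omega$, and
\[
(I-\omega h\otimes\one)^{-1}=I+\beta\, h\otimes\one,\qquad \beta=\frac{\omega}{1-\omega G(\omega)}.
\]
Applying $(I-\omega K)^{-1}=(I-\omega h\otimes\one)^{-1}(I-\omega C)^{-1}$ to $\one$ gives
\[
h+\beta G(\omega)h=\frac{h}{1-\omega G(\omega)} .
\]
Pairing with $\one$ yields
\[
\langle\one,(I-\omega K)^{-1}\one\rangle=\frac{G(\omega)}{1-\omega G(\omega)} .
\]

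\textbf{Step 3: the bound.} First, $G(\omega)\le1$ because $\mu_A$ is a probability measure. Second, $G$ is non-increasing, so by \eqref{eq:G(1)vsb} we have $G(\omega)\le G(1)\le 1-b$. Therefore
\[
\frac{G(\omega)}{1-\omega G(\omega)}\le\frac{1}{1-\omega(1-b)}=B_1 .
\]
Combined with Step 1, this proves the stated bound.

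\textbf{Reversed path.} The reversed consistently directed path has density $\langle\one,(K^*)^n\one\rangle$. This equals $\langle K^n\one,\one\rangle=\langle\one,K^n\one\rangle$ because everything is real. So the same bound holds.

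The main care point is Step 2. One must check that the order of the factorization is correct. One must also check that $(I-zC)^{-1}$ indeed pairs with $\one$ to give $G(z)$ rather than a quantity with an extra factor. Following the computation in Lemma~\ref{lem:spectral-gap} with $\rho$ replaced by $1/z$ gives
\[
\langle\one,(I-zC)^{-1}\one\rangle=z^{-1}\langle\one,(z^{-1}I-C)^{-1}\one\rangle=\langle\one,(I+z^2A)^{-1}\one\rangle=G(z).
\]
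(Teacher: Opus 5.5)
Your proposal is correct and follows essentially the same route as the paper. Both arguments sum the Neumann series for $(I-\omega K)^{-1}$, factor $I-\omega K=(I-\omega C)(I-\omega\,h\otimes\one)$, use skew-adjointness to get $\langle h,\one\rangle=G(\omega)$, and apply the rank-one inverse to reach $G(\omega)/(1-\omega G(\omega))\le B_1$. Your justification for the reversed path via $\langle\one,(K^*)^n\one\rangle=\langle\one,K^n\one\rangle$ is an equally valid substitute for the paper's isomorphism remark.
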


\begin{proof}
By Lemma~\ref{lem:spectral-gap}, \(\rho(\omega K)<1\). 
Hence by Lemma~\ref{lem:Neumann}, the Neumann
series $(I-\omega K)^{-1}=\sum_{i\ge0}(\omega K)^i$
converges in operator norm. We apply $(I-\omega K)^{-1}=\sum_{i\ge0}(\omega K)^i$
to the constant function \(\one\), and then take the inner product
with \(\one\). Since the series converges in operator norm and the
inner product is continuous, we may interchange summation and inner
product:
\[
\begin{aligned}
\langle\one,(I-\omega K)^{-1}\one\rangle
&=
\sum_{i\ge0}\langle\one,(\omega K)^i\one\rangle=\sum_{i\ge0}\omega ^i\langle\one,K^i\one\rangle.
\end{aligned}
\]
Note that 
$Z_n(\vec{\bm 1_n})=\langle\one,K^n\one\rangle.$
Therefore
$
\sum_{i\ge0}Z_i(\vec{\bm 1_i})\omega^i
=
\langle\one,(I-\omega K)^{-1}\one\rangle.
$
To evaluate the right-hand side, let \(B_\omega=I-\omega C\) and
\(h=B_\omega^{-1}\one\). Lemma~\ref{lem:skew-invertibility} ensures that
\(B_\omega\) is invertible.

We first compute \(\langle h,\one\rangle\). Since
\(h=(I-\omega C)^{-1}\one\), we have $
\langle h,\one\rangle
=
\langle \one,(I-\omega C)^{-1}\one\rangle .$
Using $(I-\omega C)^{-1}=(I+\omega C)(I-\omega^2C^2)^{-1},$ we get
\begin{equation}\label{eq:<h,1>}
\begin{aligned}
\langle h,\one\rangle
&=
\langle (I-\omega C)^{-1}\one,\one\rangle
=
\langle (I+\omega C)(I-\omega^2C^2)^{-1}\one,\one\rangle\\
&=
\langle (I-\omega^2C^2)^{-1}\one,\one\rangle
+
\omega\langle C(I-\omega^2C^2)^{-1}\one,\one\rangle.
\end{aligned}
\end{equation}
Since \(C^*=-C\), the
operator \(C^2\) is self-adjoint, so
\((I-\omega^2C^2)^{-1}\) is self-adjoint and commutes with
\(C\). Apply a similar computation to $(I-\omega^2C^2)^{-1}$ as was done for $(\rho^2I-C^2)^{-1}$ in the proof of Lemma~\ref{lem:spectral-gap}, 
$\langle C(I-\omega^2C^2)^{-1}\one,\one\rangle
=
0$ and $
\langle h,\one\rangle
=
\langle (I-\omega^2C^2)^{-1}\one,\one\rangle
=
\langle \one,(I-\omega^2C^2)^{-1}\one\rangle .$
Writing \(C^2=-A\) and applying Lemma~\ref{lem:spectral-integration} for $f=\frac{1}{1+\omega^2\lambda}$,
\[
\langle \one,(I+\omega^2A)^{-1}\one\rangle
=
\int_0^a\frac{d\mu_A(\lambda)}{1+\lambda \omega^2} = G(\omega).
\]
Thus \(\langle h,\one\rangle=G(\omega)\). By~\eqref{eq:RG(R)<1},
\(1-\omega\langle h,\one\rangle=1-\omega G(\omega)>0\).

Next, we estimate $(I-\omega K)^{-1}$.
By \(K=J+C\), 
\(I-\omega K=I-\omega C-\omega J=B_\omega -\omega J\).
Since \(Jg=\langle g,\one\rangle\one\), Lemma~\ref{lem:rank-one} implies
$B_\omega^{-1}Jg=\langle g,\one\rangle h
=(h\otimes\one)g.$
Hence
$
I-\omega K=B_\omega-\omega J
=
B_\omega (I-\omega B_\omega^{-1}J)
=
B_\omega(I-\omega\,h\otimes\one).
$
Since \(1-\omega\langle h,\one\rangle>0\), Lemma~\ref{lem:rank-one} and 
\((h\otimes\one)h=\langle h,\one\rangle h\) implies
\begin{equation}\label{eq:I-RK}
(I-\omega K)^{-1}\one
=
(I-\omega\,h\otimes\one)^{-1}h
=
\frac{h}{1-\omega\langle h,\one\rangle}=\frac{h}{1-\omega G(\omega)}.
\end{equation}
Hence 
\[\langle\one,(I-\omega K)^{-1}\one\rangle
=\frac{\langle \one, h\rangle}{1-\omega G(\omega)}=
\frac{G(\omega)}{1-\omega G(\omega)}\le
\frac1{1-\omega(1-b)}
=
B_1,\]
where the last inequality is because \(G(\omega)\le1\) and \(1-\omega G(\omega)\ge1-\omega(1-b).\) 
This implies that for each integer $n$, \(Z_n(\vec{\bm 1_n})\omega^n\leq \sum_{i\geq 0}Z_i(\vec{\bm 1_i})\omega^i\le B_1\).
Reversing all edges gives an isomorphic consistently directed path,
so the same bound holds.
\end{proof}

\subsection{Decomposition at direction flips}\label{subsec:decompositon}

We now combine the exponential decay for consistently directed paths
with a combinatorial decomposition to handle general oriented paths
with few direction flips when \(d_1(W)>\tau\).

\begin{proposition}
\label{prop:turn-decomposition}
If $\vec{x} \in \{-1,1\}^k$ has  $r$ direction flips and
\(d_1(W)>\tau\), then
\[
 2^k t(P_k(\vec{x}),W)
 \le B_1^{r+1}(1+H)^r \omega^{r-k}.
\]
\end{proposition}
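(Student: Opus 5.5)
The plan is to reduce to the directed decay of Lemma~\ref{lem:directed-decay}. Start from the interval expansion of Lemma~\ref{lem:interval-expansion}, which writes $2^k t(P_k(\vec{x}),W)$ as a signed sum over compatible families $\cF$ of even edge intervals. For each of the $r$ direction flips $v_i$, designate the edge $e=i+1$ immediately after it as a \emph{cut edge}. There are $r$ cut edges, and deleting all of them splits $\{1,\dots,k\}$ into $r+1$ segments, each lying inside a single constant-sign block. Each $\cF$ will be classified by how it meets the cut edges, the expansion regrouped accordingly, and the triangle inequality applied only to the part touching cut edges.

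\emph{Regrouping.} Fix a compatible family $\mathcal S$ of even intervals each of which contains at least one cut edge, and such that every cut edge not covered by $\mathcal S$ is to be unused. Sum over all $\cF\supseteq\mathcal S$ whose other intervals avoid all cut edges. Compatibility forces the other intervals to avoid every $I\in\mathcal S$ and its two neighbouring edges, and also every uncovered cut edge. The remaining admissible positions therefore form a disjoint union of subintervals $J_1,\dots,J_p$ of consecutive edges. None contains a cut edge, so each is consistently directed, and $p\le r+1$ since the pieces are separated by the removed material around the cut edges. Families on distinct $J_\ell$ are automatically compatible (they are separated by a removed edge). Hence this partial sum equals $\prod_{I\in\mathcal S}w_I\cdot\prod_\ell Z(J_\ell)$, where $Z(J_\ell)$ is the normalized density of the directed path on $J_\ell$. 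By Lemma~\ref{lem:directed-decay} and its reversed version, $0\le Z(J_\ell)\le B_1\omega^{-|J_\ell|}$; empty pieces contribute $1\le B_1$. Thus
\[
2^k t(P_k(\vec{x}),W)\le \sum_{\mathcal S}\prod_{I\in\mathcal S}|w_I|\cdot B_1^{r+1}\omega^{-(k-\#\text{removed edges})}.
\]

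\emph{Counting removed edges.} Removed edges are the uncovered cut edges (one each) and, for each $I\in\mathcal S$ with $|I|=2s$, at most $2s+2$ edges (the interval itself plus its two neighbours). So the factor is at most $\omega^{r-k}$ times $\prod_{I\in\mathcal S}\omega^{2s+2}$, after absorbing one $\omega$ per cut edge into $\omega^{r}$ (crudely: $\omega\ge1$, so overcounting only loosens the bound). Charge each $I\in\mathcal S$ to the leftmost cut edge it contains. Then
\[
\sum_{\mathcal S}\prod_{I\in\mathcal S}|w_I|\,\omega^{|I|+2}\le\prod_{\text{cut edges }e}\Bigl(1+\sum_{I\ni e}|w_I|\,\omega^{|I|+2}\Bigr),
\]
since all terms are nonnegative and each $\mathcal S$ appears as a product of one choice per cut edge (``nothing charged'' counted by the $1$). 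For fixed $e$ and $s$ there are at most $2s$ intervals of length $2s$ containing $e$. By Lemma~\ref{lem:Q-density-moments}, $|w_I|\le d_s(W)\le\frac13a^{s-1}$, so the inner sum is at most
\[
\sum_{s\ge1}2s\cdot\tfrac13a^{s-1}\omega^{2s+2}=\frac{2\omega^4}{3}\sum_{s\ge1}s(a\omega^2)^{s-1}=\frac{2\omega^4}{3(1-a\omega^2)^2}=H,
\]
using $a\omega^2<1$. Combining gives $B_1^{r+1}(1+H)^r\omega^{r-k}$.

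The main obstacle is the bookkeeping in the regrouping step. One must check that, for fixed $\mathcal S$, the unrestricted sum over the remaining intervals really factorizes into a product of genuine prefix densities $Z(J_\ell)$ of directed paths, which requires the admissible region to be exactly a union of separated consecutive pieces. One must also check that the exponent bookkeeping never charges more than $k-r$ net surviving edges against $\omega$. I would verify the factorization using Lemma~\ref{lem:interval-expansion} applied to each $J_\ell$ separately, and handle the exponent by the crude count above, allowing $|J_\ell|$ sums to be at least $k-r-\sum_{I\in\mathcal S}(|I|+2)$.
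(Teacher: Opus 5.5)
Your proposal is correct and follows the paper's proof essentially step for step. You mark the edge after each flip and group the interval expansion by the subfamily of intervals meeting marked edges. You then factor the rest into consistently directed segment densities, bounded via Lemma~\ref{lem:directed-decay}, with at most $r+1$ segments and at least $k-r-\sum_{I}(|I|+2)$ surviving edges. Finally, you bound the leftover sum by a product over marked edges of factors at most $1+H$, using leftmost-marked-edge charging and $d_s(W)\le\frac13a^{s-1}$. The only differences are cosmetic: you use $Z(J_\ell)\ge0$ where the paper takes absolute values, and you treat empty segments explicitly.
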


\begin{proof}
For every direction flip between the edges $j$ and $j+1$, we mark
the edge $j+1$.  Let $\mathcal M$ denote the resulting set of $r$
marked edges.

Recall from Lemma~\ref{lem:interval-expansion} that
\[
2^k t(P_k(\vec x),W)
 =
 \sum_{\mathcal F}
 \prod_{I\in\mathcal F}
 (-1)^{|I|/2}
 \left(\prod_{i\in I}x_i\right)
 d_{|I|/2}(W)= \sum_{\mathcal F}
 \prod_{I\in\mathcal F}w_I,
\]
where $\mathcal F$ ranges over all compatible families of even edge
intervals.  For such a family $\mathcal F$, define
\[
\mathcal E(\mathcal F)
 :=
 \{I\in\mathcal F:\ I\cap\mathcal M\neq\varnothing\}.
\]
Thus $\mathcal E(\mathcal F)$ consists precisely of those selected
intervals which contain at least one marked edge and hence may cross
a direction flip.

We group the terms in the above expansion according to
$\mathcal E(\mathcal F)$.  Fix a compatible family $\mathcal E$ of even intervals, every
member of which meets $\mathcal M$. For an interval $I=[a,b]$, let $
\partial I:=\{a-1,b+1\}\cap [k]$
be the set of its immediate neighbouring edge positions.
Consider the set
$
\mathcal D(\mathcal E)
 :=
 \mathcal M
 \cup
 \bigcup_{I\in\mathcal E} I
 \cup
 \bigcup_{I\in\mathcal E}\partial I.
$
We regard the edges in $\mathcal D(\mathcal E)$ as separators.

The complement $[k]\setminus\mathcal D(\mathcal E)$ is a disjoint union of maximal edge intervals $S_1,\ldots,S_p$. Each connected component of $\mathcal D(\mathcal E)$ contains a marked edge: indeed, for every $I\in\mathcal E$, the interval $I\cup\partial I$ is connected and contains a marked edge, and every marked edge itself lies in $\mathcal D(\mathcal E)$. Let $d$ be the number of connected components of $\mathcal D(\mathcal E)$. Since these components are pairwise disjoint and each contains at least one marked edge, we have $d\le r$. Removing $d$ connected components from the discrete interval $[k]$ leaves at most $d+1$ maximal complementary intervals. Therefore $p\le d+1\le r+1$.
Moreover, no $S_j$ contains a marked edge.  Since every direction
flip has a marked edge immediately after it, the signs of $\vec x$
are constant on each $S_j$.  Thus every $S_j$ supports a
consistently directed path.

We now describe the weight of all compatible families
$\mathcal F$ satisfying $\mathcal E(\mathcal F)=\mathcal E$.
For each possible subfamily \(\mathcal E\) of the form
\(\mathcal E(\mathcal F)\), define the grouped weight
\[
A(\mathcal E)
:=
\sum_{\substack{\mathcal F \text{ compatible}\\
\mathcal E(\mathcal F)=\mathcal E}}
\prod_{I\in\mathcal F}w_I.
\]
Since every compatible family \(\mathcal F\) belongs to exactly one
such class, the exact expansion is partitioned as $$
2^k t(P_k(\vec{x}),W)
=
\sum_{\mathcal E} A(\mathcal E).$$

We now compute \(A(\mathcal E)\) for a fixed \(\mathcal E\). For every compatible family \(\mathcal F\) with
\(\mathcal E(\mathcal F)=\mathcal E\), the remaining intervals
\(\mathcal F\setminus\mathcal E\) contain no marked edge. By
compatibility, they cannot meet any interval of \(\mathcal E\) or
any of its immediate neighbours. Hence each such interval lies
entirely inside one of the segments \(S_j\). Conversely, if we
choose, for each \(j\), an arbitrary compatible family
\(\mathcal F_j\subseteq S_j\), then $
\mathcal F
=
\mathcal E\cup\mathcal F_1\cup\cdots\cup\mathcal F_p$
is a compatible family in \(\{1,\dots,k\}\), and
\(\mathcal E(\mathcal F)=\mathcal E\). This gives a bijection
between the compatible families \(\mathcal F\) with
\(\mathcal E(\mathcal F)=\mathcal E\) and the tuples
\((\mathcal F_1,\ldots,\mathcal F_p)\), where each \(\mathcal F_j\)
is a compatible family of even intervals contained in \(S_j\).

Therefore, for such a family \(\mathcal F\),
\[
\prod_{I\in\mathcal F}w_I
=
\left(\prod_{I\in\mathcal E}w_I\right)
\prod_{j=1}^{p}
\left(\prod_{J\in\mathcal F_j}w_J\right).
\]
Summing over all possible \(\mathcal F\) with
\(\mathcal E(\mathcal F)=\mathcal E\) is thus the same as summing
independently over all choices of the \(\mathcal F_j\). Hence
\[
A(\mathcal E)
=
\sum_{\substack{\mathcal F \text{ compatible}\\
\mathcal E(\mathcal F)=\mathcal E}}
\prod_{I\in\mathcal F}w_I
=
\left(\prod_{I\in\mathcal E}w_I\right)
\prod_{j=1}^{p}
\left(
\sum_{\substack{\mathcal F_j \text{ compatible}\\
\mathcal F_j\subseteq S_j}}
\prod_{J\in\mathcal F_j}w_J
\right).
\]

Because the sign of $\vec x$ is constant on each $S_j$, translating
$S_j$ to $\{1,\ldots,|S_j|\}$ and applying
Lemma~\ref{lem:interval-expansion} gives
$
\sum_{\substack{\mathcal F_j\ \mathrm{compatible}\\
\mathcal F_j\subseteq S_j}}
\prod_{I\in\mathcal F_j}w_I
=
2^{|S_j|}
t\!\left(P_{|S_j|}(\vec{\mathbf1}_{|S_j|}),W\right).
$
Lemma~\ref{lem:directed-decay} therefore implies
$$
\left|
\sum_{\substack{\mathcal F_j\ \mathrm{compatible}\\
\mathcal F_j\subseteq S_j}}
\prod_{I\in\mathcal F_j}w_I
\right|
\le B_1\omega^{-|S_j|}.
$$
If $\mathcal E=\{I_1,\ldots,I_m\}$, with $|I_i|=2\ell_i$, then
$
|A(\mathcal E)|
\le B_1^p\omega^{-L_{\mathcal E}}
\prod_{i=1}^m d_{\ell_i}(W),
$
where $L_{\mathcal E}:=\sum_{j=1}^p|S_j|.$
Since $
L_{\mathcal E}\ge
k-r-\sum_{i=1}^m2\ell_i-2m$
and $p\le r+1,
$
it follows that
\begin{equation}\label{eq:A(E)}
|A(\mathcal E)|
\le B_1^{r+1}\omega^{r-k}
\prod_{I\in\mathcal E}
\left(d_{|I|/2}(W)\omega^{|I|+2}\right).
\end{equation}

Let \(\mathcal E\) range over families of pairwise disjoint, compatible even edge intervals, each of which contains at least one marked edge, and set
\begin{equation}\label{eq:Sigma}
\Sigma
:=
\sum_{\mathcal E}
\prod_{I\in\mathcal E}
d_{|I|/2}(W)\omega^{|I|+2}.
\end{equation}
Recall that $r$ is the number of direction flips of $P_k(\vec{x})$, thus \(|\mathcal M|=r\). For each \(I\in\mathcal E\), assign \(I\) to the leftmost marked edge that it contains. This assignment is injective: if two distinct intervals were assigned to the same marked edge, they would share that edge, contradicting the pairwise disjointness of \(\mathcal E\). Consequently, each marked edge \(m\in\mathcal M\) receives either no interval or one interval from
\[
\mathcal I_m:=\{I:\ I\text{ is an even edge interval and }m\in I\}.
\]
The families arising from \(\mathcal E\) satisfy additional global compatibility and disjointness constraints. If these constraints are dropped and each marked edge chooses independently either the empty set or one interval from \(\mathcal I_m\), the resulting collection of choices is a superset of the encodings of the admissible families \(\mathcal E\). Because \(d_s(W)\ge0\) and \(\omega>1\), every weight \(d_{|I|/2}(W)\omega^{|I|+2}\) is nonnegative; hence enlarging the collection of choices can only increase the sum. Therefore
\[
\Sigma
\le
\prod_{m\in\mathcal M}
\left(
1+\sum_{I\in\mathcal I_m}
d_{|I|/2}(W)\omega^{|I|+2}
\right).
\]

For a fixed marked edge \(m\) and an integer \(s\ge1\), an interval of length \(2s\) containing \(m\) is determined by its left endpoint, which can take at most \(2s\) values (fewer near the boundary of the path). Thus
\[
\sum_{I\in\mathcal I_m}
d_{|I|/2}(W)\omega^{|I|+2}
=
\sum_{s\ge1}
\sum_{\substack{I\in\mathcal I_m\\ |I|=2s}}
d_s(W)\omega^{2s+2}
\le
\sum_{s\ge1}2s\,d_s(W)\omega^{2s+2}.
\]
Hence, using $d_s(W)\leq \frac{1}{3}a^{s-1}$ from Lemma~\ref{lem:Q-density-moments} and \(a\omega^2<1\),
\[
\begin{aligned}
\sum_{s\ge1}2s\,d_s(W)\omega^{2s+2}
\le
\frac23\sum_{s\ge1}s\,a^{s-1}\omega^{2s+2}=
\frac{2\omega^4}{3}\sum_{s\ge1}s(a\omega^2)^{s-1}
=
\frac{2\omega^4}{3(1-a\omega^2)^2}
=H,
\end{aligned}
\]
where the sum computing uses
$
\sum_{s\ge1}s (a\omega^2)^{s-1}=1/(1-(a\omega^2))^2.$

This implies that $(
1+\sum_{I\in\mathcal I_m}
d_{|I|/2}(W)\omega^{|I|+2}
) \leq 1+H$. Since \(|\mathcal M|=r\), we have
$
\Sigma\le(1+H)^r.
$
Combining this estimate with \eqref{eq:A(E)} gives
\[
\begin{aligned}
2^k t(P_k(\vec x),W)
=
\sum_{\mathcal E}A(\mathcal E)
\le
\sum_{\mathcal E}|A(\mathcal E)|
\le
B_1^{r+1}\omega^{r-k}
\sum_{\mathcal E}
\prod_{I\in\mathcal E}
\left(d_{|I|/2}(W)\omega^{|I|+2}\right)
\le
B_1^{r+1}(1+H)^r\omega^{r-k},
\end{aligned}
\]
as claimed.
\end{proof}

Now we are ready to prove Lemma~\ref{cor:large-bound}.

\begin{proof}[Proof of Lemma~\ref{cor:large-bound}]
By Proposition~\ref{prop:turn-decomposition}, $2^k t(P_k(\vec{x}),W)
 \le
 B_1^{r+1}(1+H)^r \omega^{r-k}.$
Set $A_r = r+\frac{(r+1)\log B_1+r\log(1+H)}{\log \omega}.$
Since \(K_1(r)=1+\lfloor A_r\rfloor>A_r\), every integer
\(k\ge K_1(r)\) satisfies \(k>A_r\), which is equivalent to
 $B_1^{r+1}(1+H)^r \omega^{r-k}<1.$
Therefore 
 $2^k t(P_k(\vec{x}),W)<1.$
\end{proof}

\section{Proof of Theorem~\ref{thm:main}}
\label{sec:proof-main}

In this section we complete our proof of Theorem~\ref{thm:main}. 
Recall from Lemma~\ref{prop:small-final} and Lemma~\ref{cor:large-bound} we obtain the small-\(d_1\) threshold
$
 K_0(r):=
 1+\left\lfloor\frac{Dr+1}{\vartheta}\right\rfloor,
$
and the large-\(d_1\) threshold
$
 K_1(r):=
 1+\left\lfloor
 r+\frac{(r+1)\log B_1+r\log(1+H)}{\log \omega}
 \right\rfloor.
 \label{eq:K1-def}
$ Let 
\[
 K(r)=\max\{K_0(r),K_1(r)\}.
 \label{eq:K-def}
\]
We now provide an upper bound of \(K(r)\) which is linear in $r$. 
Using \(\lfloor y\rfloor\le y\) and the
choice of \(\vartheta\), we have
 $K_0(r)\le
 \frac{D}{\vartheta}r+1+\frac1{\vartheta}.$
Similarly, $K_1(r)\le
 \left(
 1+\frac{\log B_1+\log(1+H)}{\log \omega}
 \right)r
 +1+\frac{\log B_1}{\log \omega}.$
Thus, if we define
\begin{equation}
 \alpha=
 \max\left\{
 \frac{D}{\vartheta},
 1+\frac{\log B_1+\log(1+H)}{\log \omega}
 \right\},
 \quad
 \beta=
 1+\max\left\{
 \frac1{\vartheta},
 \frac{\log B_1}{\log \omega}
 \right\},
 \label{eq:alpha-beta}
\end{equation}
then $K(r)\le \alpha r+\beta.$
By the approximate numerical bounds $$\frac{D}{\vartheta}<510.209,\ \ 
1+\frac{\log B_1+\log(1+H)}{\log \omega}<1664.917,$$ and$$1+\frac1{\vartheta}<46.114,\ \ 
1+\frac{\log B_1}{\log \omega}<1453.161,$$
we may take $\alpha=1665$ and $\beta=1454.$
Thus
$K(r)\le 1665r+1454.$
\begin{proof}[Proof of Theorem~\ref{thm:main}]
 Let $P_k(\vec{x})$ be an oriented path having $r$ direction flips of length $k \ge 1665r+1454$. Then \(k\ge K(r)\).
If \(W\) is regular, then $d_1(W)=0$ and  $t(P_k(\vec{x}),W)=2^{-k}$ by 
Lemma~\ref{lem:regularity}.
Otherwise, $W$ is not regular, which implies \(d_1(W)>0\). 
If $0<d_1(W)\le\tau$,  Lemma~\ref{prop:small-final} yields $2^k t(P_k(\vec{x}),W)<1.$
Otherwise,  \(d_1(W)>\tau\), and Lemma~\ref{cor:large-bound}
yields $2^k t(P_k(\vec{x}),W)<1.$
As a consequence, $ t(P_k(\vec{x}),W)\leq 2^{-k}$ always holds, which means \(P_k(\vec x)\) is tournament anti-Sidorenko.
\end{proof}

\iffalse
For the lower bound on \(f(k)\), let \(k\ge\lceil\beta\rceil\) and
set $r_k=\left\lfloor\frac{k-\beta}{\alpha}\right\rfloor.$
Then \(r_k\ge0\) and
 $\alpha r_k+\beta\le k.$
By the first part, every orientation of the \(k\)-edge path with at
most \(r_k\) direction flips is tournament anti-Sidorenko.
Since the number of direction flips is an integer, this is
equivalent to saying that every orientation with fewer than
\(r_k+1\) direction flips is tournament anti-Sidorenko. Therefore
\[
 f(k)\ge r_k+1
 =
 1+\left\lfloor\frac{k-\beta}{\alpha}\right\rfloor.
\]
With \(\alpha=1665,\beta=1454\), this is
\[
 f(k)\ge
 1+\left\lfloor\frac{k-1454}{1665}\right\rfloor .
\]
\fi
\section{Concluding Remarks}\label{sec:concluding}

In this section we give a brief remark on the connection between the number of direction flips and the tournament Sidorenko property.
In this paper we provide a linear condition sufficient for tournament anti-Sidorenko. 
One might hope that there exists a linear condition to force an oriented path to be tournament Sidorenko, while this is false. 
The construction of the counterexample is inspired by the one in~\cite[Corollary 3.12]{HeManiNieTungWei25+}.
Fix $0<c<1$. Let $k$ be sufficiently large, and let $
L=\left\lfloor\frac{k}{\log k}\right\rfloor.
$ 
The path $P$ is constructed as follows: orient the first $L$ edges consistently and the remaining $k-L$ edges alternately. 
Thus the path has at least $k-L-1
=
k\left(1-\frac1{\log k}\right)-O(1)>ck
$
direction flips for all sufficiently large $k$.
Let $T_n$ be the transitive tournament on $[n]$, with $i\to j$
if and only if $i<j$. The $L+1$ vertices in the consistently
directed prefix must map to a strictly increasing sequence, giving
$\binom n{L+1}$ choices. The remaining $k-L$ vertices have at most
$n^{k-L}$ images. Therefore $
|\operatorname{Hom}(P,T_n)|
\le \binom n{L+1}n^{k-L},
$
and hence

$$
t(P,T_n)
\le\frac{\binom n{L+1}}{n^{L+1}}
\le\frac1{(L+1)!}.
$$
Stirling's formula gives $
\log((L+1)!)=(1-o(1))k.
$
Since $1>\log2$, $(L+1)!>2^k$ for sufficiently large $k$.
Consequently,
$$
\limsup_{n\to\infty}t(P,T_n)
\le\frac1{(L+1)!}<2^{-k}.
$$
This proves that for each $0<c<1$, there always exists an oriented path $P$ with $r/k>c$ and is not tournament Sidorenko.

Another direction is to characterize the anti-Sidorenko condition in great detail.
Recall that $f(k)$ is the maximum number $m\in\{0,1,...,k\}$ such that every orientation of the $k$-edge path with fewer than $m$ direction flips is tournament anti-Sidorenko. In this paper, we prove that $\liminf_{k \to \infty}\frac{f(k)}{k}\ge\frac{1}{1665}$. A natural question is whether the limit of $\frac{f(k)}{k}$ exists.

\begin{question}
Does the limit of $\frac{f(k)}{k}$ exist as $k \to \infty$?  
\end{question}

If this limit does not exist, it would be natural to determine the
corresponding lower limit.

\begin{problem}
Determine the exact value of $\liminf_{k \to \infty}\frac{f(k)}{k}$.
\end{problem}

\section*{Declaration on the use of AI}
Part of this project began in 2024. The idea of studying the expansion of the sequence $Z_n(\vec{x})$, the dominance of $d_1(W)$ and the way to deal with decomposition at direction flips was proposed by the authors. GPT‑5.6 Sol and the Eureka system mainly assisted the authors in optimizing the choice of parameters in Section 3 and Section 4 and suggested the spectral method based on the total-cone version of the Krein–Rutman theorem and Neumann series to achieve the desired exponential decay bound for consistently directed paths in Lemma~\ref{lem:directed-decay}, Section~\ref{sec:large}. The mathematical statements and proofs were subsequently verified by the authors, who take full responsibility for the manuscript.

Eureka is a multi-agent system developed by JIUCHONG at the University of Science and Technology of China for mathematical research through human--AI interaction.

\bibliographystyle{plain}
\bibliography{OrientedPaths}

\appendix

\section{Proofs of lemmas in Section~\ref{subsec:spectral}}
\label{app:A}

\subsection{Krein--Rutman Theorem}\label{app:krein-rutman}
In this subsection, we introduce the classical Krein--Rutman Theorem and explain why it implies Lemma~\ref{thm:positive-compact}. The background of the Krein--Rutman Theorem stated here is from~\cite[\S 5.2]{LuLu2022}. 

For a real Banach space \(E\), let
$
E_{\mathbb C}=E+iE=\{u+iv:u,v\in E\}
$
be its complexification, equipped with the norm
$
\|u+iv\|_{E_{\mathbb C}}
=
\sup_{\theta\in[0,2\pi]}
\|u\cos\theta+v\sin\theta\|_E.$
For a bounded real-linear operator \(T:E\to E\), its complexification is
$
T_{\mathbb C}:E_{\mathbb C}\to E_{\mathbb C},$
such that
$T_{\mathbb C}(u+iv)=Tu+iTv.
$
The spectral radius \(\rho(T)\) of a real operator \(T\)
is understood as the spectral radius of its complexification:
\[
\rho(T):=\rho(T_{\mathbb C})
=\sup\{|\lambda|:\lambda\in\sigma(T_{\mathbb C})\}.
\]
Equivalently, \(\rho(T)=\lim_{n\to\infty}\|T^n\|^{1/n}\) by Gelfand's formula.

A nonempty closed convex subset
\(P\subseteq E\) is called a \emph{cone} if
$\alpha P\subseteq P\quad(\alpha>0)$ and
$P\cap(-P)=\{0\}.$
A cone \(P\) induces a partial order on \(E\) by declaring
\(x\le y\) if and only if \(y-x\in P\). A cone is \emph{total} if $\overline{P-P}=E.$
A bounded linear operator \(T:E\to E\) is \emph{positive} with
respect to this order if \(T(P)\subseteq P\).

The total-cone version of the Krein--Rutman theorem is the following.

\begin{theorem}[see~\cite{LuLu2022}, Theorem 5.2]
\label{thm:KR}
Let \(E\) be an ordered Banach space whose positive cone \(P\) is
total. Let \(T:E\to E\) be a compact positive operator. If the
spectral radius \(\rho(T)\) is positive, then \(\rho(T)\) is an
eigenvalue of both \(T\) and \(T^*\), and there exist nonzero vectors
\(u\in P\setminus\{0\}\) and $u^* \in P^*\setminus \{0\}$ such that
\[
Tu=\rho(T)u, \qquad T^*u^*=\rho(T^*)u^*.
\]
\end{theorem}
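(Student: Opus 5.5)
The plan is to prove the statement in the classical way: first show that $\rho:=\rho(T)$ itself belongs to $\sigma(T)$, with the resolvent blowing up along the real axis on positive vectors. Then extract a positive eigenvector by compactness, and finally run the same argument for $T^*$ on the dual cone $P^*=\{f\in E^*: f(x)\ge0\ \forall x\in P\}$.

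\emph{Step 1 (positivity of the resolvent).} For real $t>\rho$, Gelfand's formula gives norm convergence of the Neumann series $R(t)=(tI-T)^{-1}=\sum_{n\ge0}t^{-n-1}T^n$. Since $P$ is a closed convex cone and each $T^n$ maps $P$ into $P$, the operator $R(t)$ maps $P$ into $P$. Moreover, for $x\in P$ and $f\in P^*$, the function $t\mapsto f(R(t)x)$ is a power series in $1/t$ with nonnegative coefficients $f(T^nx)$.

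\emph{Step 2 (main obstacle: $\sup_{t\downarrow\rho}\|R(t)x\|=\infty$ for some $x\in P$).} I expect this to be the main difficulty. Since $T$ is compact and $\rho>0$, $\sigma(T_{\mathbb C})$ contains an eigenvalue $\lambda_0$ with $|\lambda_0|=\rho$. Hence $\lambda\mapsto (\lambda I-T_{\mathbb C})^{-1}$ has a singularity on the circle $|\lambda|=\rho$, so the series $\sum_n \lambda^{-n-1}T^n$ has radius of convergence exactly $\rho$ in $1/\lambda$. I would argue by contradiction, first in the scalar case. Suppose that for every $x\in P$ and $f\in P^*$ the nonnegative sums $\sum_n\rho^{-n-1}f(T^nx)$ are finite. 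Pringsheim's theorem, which says that a power series with nonnegative coefficients has a singularity at its positive radius point, then shows that each scalar series $f(R(\lambda)x)$ extends analytically past $|\lambda|=\rho$.

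To pass from these scalar functions to the operator resolvent I would use totality. Since $\overline{P-P}=E$, any $x\in E$ is approximated by differences $x_1-x_2$ with $x_i\in P$. I would then use the uniform boundedness principle, together with a Baire-category argument on $P$, to obtain a uniform bound $\sup_n\rho^{-n}\|T^n\|<\infty$ restricted to a neighbourhood. This would force the spectral radius of $\rho^{-1}T$ to be strictly below $1$, a contradiction. The conclusion is that there exist $t_j\downarrow\rho$ and $x_j\in P$ with $\|x_j\|=1$ and $\|R(t_j)x_j\|\to\infty$.

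\emph{Step 3 (eigenvector).} Set $y_j=R(t_j)x_j/\|R(t_j)x_j\|\in P$, so $\|y_j\|=1$ and
\[
(t_jI-T)y_j=\frac{x_j}{\|R(t_j)x_j\|}\to 0 .
\]
By compactness of $T$, a subsequence has $Ty_j\to v$. Then $y_j=t_j^{-1}\bigl(Ty_j+o(1)\bigr)\to u:=v/\rho$. Because $P$ is closed, $u\in P$; also $\|u\|=1$ and $Tu=\rho u$. In particular $\rho$ is an eigenvalue of $T$.

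\emph{Step 4 (dual).} The set $P^*$ is closed and convex. It satisfies $P^*\cap(-P^*)=\{0\}$ precisely because $P$ is total: a functional vanishing on $P$ vanishes on the dense set $P-P$. The adjoint $T^*$ is compact by Schauder's theorem, satisfies $T^*(P^*)\subseteq P^*$, and has $\rho(T^*)=\rho(T)$. Steps 1–3 never used totality of the cone on which the operator acts beyond establishing Step 2. For $T^*$ one can obtain Step 2 directly from Step 3: take $u$ from Step 3, and by Hahn--Banach choose $g\in P^*$ with $g(u)>0$; such a $g$ exists because $u\neq0$ and $P$ is closed and convex. Then $R(t)^*g$ evaluated at $u$ equals $g(u)/(t-\rho)\to\infty$, which is the required blow-up. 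Steps 1 and 3 then run verbatim in $E^*$ and produce $u^*\in P^*\setminus\{0\}$ with $T^*u^*=\rho u^*$.
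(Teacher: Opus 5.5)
\paragraph{Assessment.} The paper gives no proof of this statement; it quotes it from Lu and Lu, so your argument has to stand on its own. Steps 1, 3 and 4 are sound. The dual blow-up $g(R(t)u)=g(u)/(t-\rho)$ in Step 4 is a good way to avoid needing totality of $P^*$. Note that producing $g\in P^*$ with $g(u)>0$ also uses $-u\notin P$.

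\paragraph{The gap is in Step 2.} This is exactly where compactness and totality have to interact, and your sketch fails in three places.
\begin{itemize}
\item Pringsheim's theorem does not turn finiteness of $\sum_n\rho^{-n-1}f(T^nx)$ into analytic continuation past $|\lambda|=\rho$. For example, $\sum_n z^n/(n+1)^2$ has nonnegative coefficients and is finite at $z=1$, yet it is singular there.
\item The passage from scalar data ($x\in P$, $f\in P^*$) to an operator-norm bound is not available for a merely total cone. The condition $\overline{P-P}=E$ gives no norm control on decompositions $x=x_1-x_2$. Also, when $P$ is not normal, $P^*-P^*$ need not be dense in $E^*$. So neither the uniform boundedness principle nor a Baire argument on $P$ reaches all $x\in E$ and $f\in E^*$.
\item Even if you obtained $\sup_n\rho^{-n}\|T^n\|<\infty$, this only gives $\rho(\rho^{-1}T)\le 1$, not $<1$. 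Take $T=\rho I$ on $\mathbb{R}$. So no contradiction follows.
\end{itemize}

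\paragraph{How to close it.} The missing ingredient is that, since $T$ is compact, the peripheral eigenvalue $\lambda_0$ is a \emph{pole} of the resolvent. With this you can argue pointwise rather than in norm. For $x\in P$, $f\in P^*$ (extended complex-linearly) and $|\mu|>\rho$,
\[
|f(R(\mu)x)|\le\sum_{n\ge0}|\mu|^{-n-1}f(T^nx)\le\sum_{n\ge0}\rho^{-n-1}f(T^nx)=:M(x,f).
\]
Suppose $M(x,f)<\infty$ for all such $x,f$.
\begin{itemize}
\item Let $A_{-m}\neq0$ ($m\ge1$) be the leading Laurent coefficient of the resolvent at $\lambda_0$. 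Letting $\mu=s\lambda_0$ with $s\downarrow1$ gives $f(A_{-m}x)=\lim(\mu-\lambda_0)^m f(R(\mu)x)=0$.
\item $P^*$ separates the points of $E$, by Hahn--Banach together with closedness of $P$ and $P\cap(-P)=\{0\}$. Applying this to the real and imaginary parts of $A_{-m}x$ shows that $A_{-m}$ vanishes on $P$.
\item By totality and continuity, $A_{-m}$ then vanishes on $\overline{P-P}=E$, hence on $E_{\mathbb C}$. This contradicts $A_{-m}\neq0$.
\end{itemize}
Hence some $x\in P$ and $f\in P^*$ have $M(x,f)=\infty$. By monotone convergence, $f(R(t)x)\uparrow\infty$ as $t\downarrow\rho$, so $\|R(t)x\|\to\infty$. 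This is exactly the input your Step 3 needs, with $x_j=x/\|x\|$. No Pringsheim, uniform boundedness or Baire argument is required.
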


We now specialize this theorem to \(L^2[0,1]\). Let $P=\{f\in L^2[0,1]: f\ge0\ \text{almost everywhere}\}.$
Then \(P\) is a closed convex cone and \(P\cap(-P)=\{0\}\).
Moreover, \(P\) is total. Indeed, every \(f\in L^2[0,1]\) can be
written as
$f=f_+-f_-,$
where
\[
f_+(x)=\max\{f(x),0\},\qquad
f_-(x)=\max\{-f(x),0\}.
\]
Both \(f_+\) and \(f_-\) lie in \(P\), so \(P-P=E\). Thus every positive compact operator \(T\) on \(L^2[0,1]\)
satisfies the hypotheses of Theorem~\ref{thm:KR}. Hence, if
\(\rho(T)>0\), there exists a nonzero function
\(f\in P\), that is, \(f\ge0\) almost everywhere, such that
$Tf=\rho(T)f.$ This yields the following form used in the main text.

\begin{lemma}[Lemma~\ref{thm:positive-compact} in Section~\ref{sec:preliminaries}]
\label{thm:weak--KR}
Let \(T\) be a positive compact operator on \(L^2[0,1]\). If its
spectral radius \(\rho(T)\) is positive, then there exists a nonzero
function \(f\ge0\) almost everywhere such that
\[
Tf=\rho(T)f.
\]
\end{lemma}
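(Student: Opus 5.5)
The statement is Lemma~\ref{thm:weak--KR}, the $L^2[0,1]$ specialization of the total-cone Krein--Rutman theorem (Theorem~\ref{thm:KR}). The plan is to check the hypotheses of Theorem~\ref{thm:KR} with $E=L^2[0,1]$ and $P=\{f\ge0 \text{ a.e.}\}$, and then read off the eigenvector.

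First, $E=L^2[0,1]$ (real) is a Banach space, and $P$ is a cone.
\begin{itemize}
\item It is convex and closed under multiplication by positive scalars.
\item It is closed: if $f_n\to f$ in $L^2$ with $f_n\ge0$, a subsequence converges a.e., so $f\ge0$ a.e.
\item It satisfies $P\cap(-P)=\{0\}$, since $f\ge0$ and $f\le0$ a.e.\ force $f=0$ in $L^2$.
\end{itemize}
The cone is total because every $f$ decomposes as $f=f_+-f_-$ with $f_\pm\in P$, so $P-P=E$ and its closure is certainly $E$.

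Second, the positivity notion used in the lemma ($Tf\ge0$ a.e.\ whenever $f\ge0$ a.e.) is exactly $T(P)\subseteq P$, which is the positivity required in Theorem~\ref{thm:KR}. Compactness is assumed. The spectral radius in the lemma should be read as the spectral radius of the complexification, as fixed in the appendix; by Gelfand's formula it agrees with $\lim\|T^n\|^{1/n}$. So the hypothesis $\rho(T)>0$ is the same in both statements.

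Applying Theorem~\ref{thm:KR} then gives $u\in P\setminus\{0\}$ with $Tu=\rho(T)u$. Setting $f=u$ gives a nonzero function, nonnegative almost everywhere, with $Tf=\rho(T)f$, as claimed.

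The only step needing care is matching conventions: that the paper's $\rho(T)$ is the complexified spectral radius, and that ``positive operator'' coincides with cone-positivity. Both are definitional, so there is no real obstacle. Closedness of $P$ via a.e.-convergent subsequences is the one small analytic point to state explicitly.
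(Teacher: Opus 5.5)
Your proposal is correct and follows essentially the same route as the paper: check that $P=\{f\ge 0\text{ a.e.}\}$ is a closed, total cone in $L^2[0,1]$ (using $f=f_+-f_-$), identify operator positivity with $T(P)\subseteq P$, and apply the total-cone Krein--Rutman theorem (Theorem~\ref{thm:KR}). Your added details make explicit two points the paper asserts without argument: closedness of $P$ via an a.e.-convergent subsequence, and that $\rho(T)$ means the spectral radius of the complexification.
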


\subsection{Proof of Lemma~\ref{lem:skew-invertibility}}\label{app:skew}
We give a proof of Lemma~\ref{lem:skew-invertibility} here.
\iffalse
\begin{lemma}
\label{lem:appskew-invertibility}
Let \(L^2=L^2([0,1];\mathbb R)\). If $C$ is a bounded operator
satisfies \(C^*=-C\) and \(\eta>0\), then \(\eta I-C\) is invertible, and $\|(\eta I-C)g\|_2^2
=\eta^2\|g\|_2^2+\|Cg\|_2^2$
for $g\in L^2.$
Furthermore,
\[
(\eta I-C)^{-1}
=(\eta I+C)(\eta^2I-C^2)^{-1}.
\]
\end{lemma}
\fi

\begin{proof}[Proof of Lemma~\ref{lem:skew-invertibility}]
Since $C^* = -C$, for every $g \in  L^2$, $\langle Cg,g\rangle = \langle g,C^*g\rangle 
= \langle g,-Cg\rangle 
= -\langle Cg,g\rangle,$
so $\langle Cg,g\rangle = 0$. Hence
\[
\begin{aligned}
\|(\eta I-C)g\|_2^2
&= \langle \eta g - Cg, \eta g - Cg \rangle = \eta^2\|g\|_2^2 - \eta\langle g,Cg\rangle - \eta\langle Cg,g\rangle + \|Cg\|_2^2 = \eta^2\|g\|_2^2 + \|Cg\|_2^2.
\end{aligned}
\]
This gives
$\|(\eta I-C)g\| \ge \eta \|g\|.$
The estimate shows that $\eta I-C$ has closed range. The same
estimate for its adjoint $\eta I+C$ shows that
$\ker(\eta I+C)=\{0\}$. Hence
$
\overline{\operatorname{ran}(\eta I-C)}
=
\ker(\eta I+C)^\perp=L^2.
$
The range is both closed and dense, so $\eta I-C$ is surjective, hence invertible.

Next, $C^2$ is self-adjoint and$
\langle C^2g,g\rangle = \langle Cg,C^*g\rangle 
= \langle Cg,-Cg\rangle 
= -\|Cg\|^2.$
Thus$
\langle (\eta^2 I - C^2)g,g\rangle 
= \eta^2\|g\|^2 + \|Cg\|^2 \ge \eta^2\|g\|^2,$
so $\eta^2 I - C^2 \ge\eta^2I$; hence $0\notin\sigma(\eta^2 I - C^2 )$ and $\eta^2 I - C^2 $ is invertible.

Finally, since
$(\eta I-C)(\eta I+C) = \eta^2 I - C^2,$
we have
$(\eta I-C)(\eta I+C)(\eta^2 I - C^2)^{-1} = I.$
As $\eta I-C$ is invertible, it follows that
$(\eta I-C)^{-1} = (\eta I+C)(\eta^2 I - C^2)^{-1}.$
\end{proof}

\subsection{Proof of Lemma~\ref{lem:Neumann}}\label{app:Neumann}
In this subsection, we provide a proof of a more general version of Lemma~\ref{lem:Neumann}.
\begin{lemma}[Norm-convergent Neumann series]
\label{lem:neumann-proof}
Let \(X\) be a real Banach space and \(S\in\mathcal B(X)\).
If \(\rho(S)<1\), then \(I-S\) is invertible in \(\mathcal B(X)\), and
\[
(I-S)^{-1}=\sum_{n=0}^{\infty}S^n,
\]
where the series converges in operator norm. Moreover, if $\|S\|\le q$ for some $q<1$, then $
\|(I-S)^{-1}\|\le(1-q)^{-1}.
$
\end{lemma}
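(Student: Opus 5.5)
The plan is the standard Neumann-series argument in a Banach algebra, with Gelfand's formula replacing the usual norm hypothesis. We have \(\rho(S)=\lim_{n\to\infty}\|S^n\|^{1/n}<1\); for a real operator this is interpreted via the complexification, and since \(\|S^n_{\mathbb C}\|\) and \(\|S^n\|\) agree up to a fixed constant, the limit is the same. So we may fix \(q\) with \(\rho(S)<q<1\) and find \(N\) such that \(\|S^n\|\le q^n\) for all \(n\ge N\). Consequently \(\sum_{n\ge0}\|S^n\|<\infty\). Since \(\mathcal B(X)\) is complete, the series \(\sum_{n\ge0}S^n\) converges in operator norm to some bounded operator \(R\).

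Next we verify that \(R\) inverts \(I-S\) by telescoping. The partial sums \(R_N=\sum_{n=0}^{N}S^n\) satisfy
\[
(I-S)R_N=R_N(I-S)=I-S^{N+1}.
\]
Since \(\|S^{N+1}\|\to0\) and multiplication is continuous in operator norm, letting \(N\to\infty\) gives \((I-S)R=R(I-S)=I\). Hence \(I-S\) is invertible in \(\mathcal B(X)\) and its inverse is \(\sum_{n\ge0}S^n\).

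For the quantitative statement, assume \(\|S\|\le q<1\). Submultiplicativity gives \(\|S^n\|\le q^n\), so
\[
\|(I-S)^{-1}\|\le\sum_{n\ge0}q^n=(1-q)^{-1}.
\]
In this case \(\rho(S)\le\|S\|<1\) holds automatically, so the first part applies.

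The only step that needs care is the real-versus-complex spectral radius. The paper defines \(\rho\) through the complexification, and we must confirm that Gelfand's formula transfers to the real operator. The point is that \(\|T\|\le\|T_{\mathbb C}\|\le 2\|T\|\) for every real \(T\) (in fact equality holds with the stated sup-norm on \(E_{\mathbb C}\)). Applying this to \(T=S^n\) and using \((S^n)_{\mathbb C}=(S_{\mathbb C})^n\), the \(n\)-th roots of the two norms have the same limit. Everything else is routine.
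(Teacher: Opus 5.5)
Your proposal is correct and takes essentially the same route as the paper. Both arguments use Gelfand's formula to get $\|S^n\|\le q^n$ for large $n$, then absolute convergence in $\mathcal B(X)$, the telescoping identity $(I-S)\sum_{n\le m}S^n=I-S^{m+1}$, and the geometric-series bound. Your additional check that $\|T\|=\|T_{\mathbb C}\|$ under the sup-norm on the complexification justifies applying Gelfand's formula to real operators, which the paper only asserts.
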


\begin{proof}
Choose \(q\) with \(\rho(S)<q<1\). By the spectral-radius formula,
$\lim_{n\to\infty}\|S^n\|^{1/n}=\rho(S)<q.$
Hence there exists \(N\ge1\) such that
$\|S^n\|^{1/n}\le q,$ that is $\|S^n\|\le q^n$ for $n\ge N.$

Thus
\[
\sum_{n=0}^{\infty}\|S^n\|
\le
\sum_{n=0}^{N-1}\|S^n\|
+
\sum_{n=N}^{\infty}q^n
<\infty.
\]
Since \(\mathcal B(X)\) is a Banach space, absolute convergence in
operator norm implies convergence. Therefore the series
$
B:=\sum_{n=0}^{\infty}S^n
$
converges in operator norm and defines a bounded linear operator.

For every integer \(m\ge0\), $(I-S)\sum_{n=0}^{m}S^n
=
I-S^{m+1}
=
\left(\sum_{n=0}^{m}S^n\right)(I-S).$
For all sufficiently large \(m\), we have
\(\|S^{m+1}\|\le q^{m+1}\to0\). Letting \(m\to\infty\) and using
the continuity of multiplication in \(\mathcal B(X)\), we obtain
$(I-S)B=I$ and
$B(I-S)=I.$
Hence \(I-S\) is invertible with
\[
(I-S)^{-1}=B=\sum_{n=0}^{\infty}S^n.
\]

If moreover \(\|S\|\le q<1\), then \(\|S^n\|\le q^n\) for every
\(n\ge0\), and therefore
\[
\|(I-S)^{-1}\|
=
\left\|\sum_{n=0}^{\infty}S^n\right\|
\le
\sum_{n=0}^{\infty}\|S^n\|
\le
\sum_{n=0}^{\infty}q^n
=
\frac1{1-q}.
\]
\end{proof}

\end{document}